\newcommand{\eff}{{\ensuremath{\mathbb{F}}}} 
\newcommand{\A}{\ensuremath{\mathcal{A}}}
\newcommand{\zed}{{\ensuremath{\mathbb{Z}}}}
\newtheorem{theorem}{Theorem}[section]
\newtheorem{lemma}[theorem]{Lemma}
\newtheorem{corollary}[theorem]{Corollary}
\theoremstyle{definition}
\newtheorem{example}[theorem]{Example}
\newtheorem{definition}[theorem]{Definition}
\newtheorem{remark}[theorem]{Remark}
\title{Circular external difference families, graceful labellings and cyclotomy}
\author[1]{Maura B.\ Paterson}
\author[2,3]{Douglas R.\ Stinson\thanks{D.R.\ Stinson's research is supported by  NSERC discovery grant RGPIN-03882.}}
\affil[1]{School of Computing and Mathematical Sciences, Birkbeck, University of London, Malet St, London WC1E 7HX, UK}
\affil[2]{David R.\ Cheriton School of Computer Science\\University of Waterloo\\ Waterloo ON, N2L 3G1\\Canada}
\affil[3]{School of Mathematics and Statistics\\
Carleton University\\
Ottawa, Ontario, K1S 5B6, Canada}
\begin{document}
\maketitle

\begin{abstract}
(Strong) circular external difference families (which we denote as CEDFs and SCEDFs) can be used to construct nonmalleable threshold schemes.
They are a variation of (strong)  external difference families, which have been extensively studied in recent years. We provide a variety of constructions for CEDFs based on graceful labellings ($\alpha$-valuations) of lexicographic products  $C_n \boldsymbol{\cdot} K_{\ell}^c$, where $C_n$ denotes a cycle of length $n$. SCEDFs having more than two subsets do not exist. However, we can construct close approximations (more specifically, certain types of circular algebraic manipulation detection (AMD) codes) using the theory of cyclotomic numbers in finite fields. 
\end{abstract}

\section{Introduction}
\label{intro.sec}

Veitch and Stinson \cite{VS} introduced (strong) circular external difference families (denoted as CEDFs and SCEDFs) as a tool to construct nonmalleable threshold schemes. CEDFs and SCEDFs can be viewed as a variation of (strong)  external difference families (EDFs and SEDFs), which have been extensively studied in recent years. In this paper, we investigate the mathematical properties of  CEDFs and SCEDFs and provide several constructions. Many classes of CEDFs can be obtained using graceful labellings, in particular, $\alpha$-valuations of lexicographic products $C_n \boldsymbol{\cdot} K_{\ell}^c$, where $C_n$ is a cycle of length $n$.  

Wu, Yang and Feng \cite{feng} showed that SCEDFs on more than two subsets do not exist. However, we can construct close approximations (more specifically, certain types of circular algebraic manipulation detection (AMD) codes) using the theory of cyclotomic classes in finite fields.

We begin by presenting in Section \ref{back.sec} the cryptographic motivation for the mathematical problems we study in this paper. 
Following this discussion, in Section \ref{defn.sec}, we give formal definitions of the actual combinatorial structures that we are considering, as well as some background and previous results.

Section \ref{CEDF.sec} is devoted to CEDFs. In particular, we investigate $(m \ell^2 + 1,m,\ell; 1)$-CEDFs in detail (for the definition, see Definition \ref{CEDF.def}). Some of our main constructions make use of graceful labellings ($\alpha$-valuations) of
lexicographic products  $C_n \boldsymbol{\cdot} K_{\ell}^c$, where $C_n$ denotes a cycle of length $n$.

Section \ref{strong.sec} presents our results on SCEDFs and circular AMD codes (which can be viewed as approximations to SCEDFs).
In particular, we use known results on cyclotomic numbers to find good circular AMD codes on $m$ sets, for $m = 3,4$. The theory we develop extends to larger values of $m$ as well. Also, using a notion of ``tilings,'' we provide a structural proof that SCEDFs cannot contain certain SEDFs (these tiling results are subsumed by the general nonexistence result from \cite{feng}, but the proof technique might still be of interest). 

\subsection{Motivation}
\label{back.sec}

Various types of difference sets and difference families have had a long history in combinatorics. Applications of these objects to the construction of robust threshold schemes has been an active area of research in cryptography for almost 30 years, beginning with Ogata and Kurosawa in 1996 \cite{OK96}. Cramer {\it et al.} \cite{CDFPW1} introduced 
\emph{algebraic manipulation detection codes} (AMD codes) in 2008, motivated by applications to robust secret sharing and other cryptographic problems. AMD codes include many types of difference sets and difference families as special cases. In 2016, Paterson and Stinson \cite{PS16} established that \emph{R-optimal AMD codes} corresponded to external difference families (EDFs) and strong  external difference families (SEDFs). EDFs had been previously studied by various authors, but strong EDFs opened up a new area of research. Several papers have studied SEDFs since the publication of \cite{PS16}, including 
\cite{BJWZ,HucJefNep,HuPa,JeLi,LeLiPr,LePr,MS17,WYFF,feng}.

We give an informal description of AMD codes now. Roughly speaking, we have a finite set of possible \emph{sources}. Each source has one of more possible \emph{encodings}. A source $s$ is chosen uniformly at random from the set of possible sources and then it is encoded as an element $g$ of an additive abelian group $G$ (the encoding may be randomized). In an AMD code, the adversary chooses a nonzero element $\Delta \in G$ (without knowing $s$ or $g$). The adversary ``wins'' if $g+\Delta$ is a valid encoding of a source $s'\neq s$.  On the other hand, in a strong AMD code, the adversary is given the  source $s$ (but not the encoded source $g$) before they choose $\Delta \in G$.  The winning condition is the same as before.

The basic idea can be illustrated by considering a difference set, e.g., the $(13,4,1)$-difference set $D = \{0,1,3,9\} \subseteq \zed_{13}$. Suppose we have four equiprobable sources, say $s_1,s_2,s_3,s_4$, which are encoded (deterministically) as $0,1,3,9$, resp. So an encoded source is an element of $D$, each occurring with probability $1/4$. For any $\Delta \in \zed_{13} \setminus \{0\}$, the probability that $g+\Delta \in D$ is $1/4$. This follows from the fact that every nonzero $\Delta$ occurs exactly once as a difference of two distinct elements in $D$.

In the context of a threshold scheme, an AMD code can be use to add \emph{robustness}. The basic method is to first encode the secret and then create shares for the encoded secret using a Shamir threshold scheme (see \cite{ACM:Sha79}), for example. 

Veitch and Stinson recently studied a new variation of \emph{non-malleable threshold schemes} in \cite{VS}. Interesting examples of these threshold schemes can be obtained from circular external difference families (CEDFs) and strong circular external difference families (SCEDFs), which were defined in \cite{VS}. In this paper, we mainly concentrate on constructions of these combinatorial structures and corresponding circular AMD codes.

Here is a brief cryptographic motivation for circular AMD codes and circular EDFs in the context of non-malleable secret sharing. Analogous to the original definition of an AMD code, we can consider a circular AMD code. 
Suppose the group $G$ is a cyclic group. The idea is that the adversary is attempting to modify the encoding so it that it is a valid encoding of a source that has a specified, predetermined relation with the original source $s$, namely the modified source should be $s+1$ in this case. The only difference in the definition of the AMD code is that the winning condition for the adversary is now that $g+\Delta$ is a valid encoding of the source $s+1$ (where $s$ is the original source). 
Analogous to the case with AMD codes and CEDFs, the ``optimal'' examples of circular AMD codes are circular EDFs, as was shown in \cite{VS}.

\subsection{Definitions and Background}
\label{defn.sec}

We now  recall some definitions given in \cite{PS16,VS}. For two disjoint subsets $A,B$ of an additive abelian group $G$, define the multiset $\mathcal{D}(B,A)$ as follows:
\[ \mathcal{D}(B,A) = \{ y - x: y \in B, x \in A \}.  \]

\begin{definition}[Circular weak AMD code]
Let $\mathcal{G}$ be an additive abelian group of order $n$ and let $\mathcal{A} = \{{A}_0,\dots,{A}_{m-1}\}$ be $m$ pairwise disjoint $\ell$-subsets of $\mathcal{G}$. Let $0 < \epsilon < 1$. Then $(\mathcal{G},\mathcal{A})$ is an \emph{$\epsilon$-secure circular $(n,m,\ell)$-AMD code} if an adversary cannot win the following \emph{circular AMD game} with  probability greater than $\epsilon$.
\begin{enumerate}
\item The adversary chooses a value $\Delta \in \mathcal{G} \setminus \{0\}$. \vspace{-.08in}
\item The \emph{source} $i \in \{0,\dots,m-1\}$ is chosen uniformly at random. \vspace{-.08in}
\item The source is encoded by choosing $g$ uniformly at random from ${A}_i$. \vspace{-.08in}
\item The adversary wins if and only if $g + \Delta \in A_j$ where $j = i + 1 \bmod m$. \vspace{-.12in}
\end{enumerate}
\end{definition}

\medskip

\begin{definition}[Circular external difference family (CEDF)]
\label{CEDF.def}
Let $G$ be an additive abelian group of order $n$. Suppose $m \geq 2$. 
An $(n, m, \ell; \lambda)$-circular external difference family (or $(n, m, \ell; \lambda)$-CEDF) is a set of $m$ disjoint $\ell$-subsets of $G$, say $\mathcal{A} = \{A_0,\dots,A_{m-1}\}$, such that the following multiset equation holds:
\[
\bigcup_{j=0}^{m-1} \mathcal{D}(A_{j+1\bmod m}, A_j) = \lambda (G \setminus \{0\}).
\]
We observe that $m \ell^2 = \lambda (n-1)$ if an $(n, m, \ell; \lambda)$-CEDF exists. When $\lambda = 1$, we have $n = m \ell^2 + 1$.
\end{definition}

\medskip

It is not hard to see that an $\epsilon$-secure weak circular $(n,m,\ell)$-AMD code 
has $\epsilon \geq {\ell}/({n-1})$. If $\epsilon = {\ell}/({n-1})$, the code is \emph{R-optimal}.
Veitch and Stinson proved the following result in 
\cite{VS}.

\begin{theorem}
\label{VSthm}
An R-optimal circular weak $(n,m,\ell)$-AMD code is equivalent to an $(n,m,\ell;\lambda)$-CEDF.
\end{theorem}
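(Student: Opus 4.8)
The plan is to prove the equivalence by directly computing the adversary's optimal success probability in the circular AMD game and showing that R-optimality is governed by exactly the same multiset condition that defines a CEDF. Throughout, I work with a fixed family $\mathcal{A} = \{A_0,\dots,A_{m-1}\}$ of pairwise disjoint $\ell$-subsets of $G$, so that ``equivalence'' means that the same object $(G,\mathcal{A})$ is an R-optimal code precisely when it is a CEDF.

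First I would fix a choice $\Delta \in G \setminus \{0\}$ and compute the conditional winning probability. Since the source $i$ is uniform on $\{0,\dots,m-1\}$ and $g$ is uniform on $A_i$, the adversary wins exactly when $g \in A_i$ and $g + \Delta \in A_{i+1 \bmod m}$. Writing $N_i(\Delta) = |\{g \in A_i : g + \Delta \in A_{i+1 \bmod m}\}|$, we see that $N_i(\Delta)$ is precisely the multiplicity of $\Delta$ in the multiset $\mathcal{D}(A_{i+1 \bmod m}, A_i)$. Averaging over the source and the encoding gives
\[
\Pr[\text{win} \mid \Delta] = \frac{1}{m} \sum_{i=0}^{m-1} \frac{N_i(\Delta)}{\ell} = \frac{1}{m\ell}\, \mathrm{mult}_\Delta\!\left( \bigcup_{j=0}^{m-1} \mathcal{D}(A_{j+1 \bmod m}, A_j) \right),
\]
where $\mathrm{mult}_\Delta(\cdot)$ denotes multiplicity. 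Because the adversary commits to $\Delta$ before learning $i$ or $g$, and any randomized choice of $\Delta$ yields a winning probability that is a convex combination of these conditional values, the optimal success probability is $\epsilon^{*} = \max_{\Delta \neq 0} \Pr[\text{win} \mid \Delta]$.

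Next I would bound $\epsilon^{*}$ by an averaging argument. The combined multiset has total size $\sum_{j} |A_{j+1 \bmod m}|\,|A_j| = m\ell^2$. Since the $A_j$ are pairwise disjoint, $0$ never arises as such a difference, so all $m\ell^2$ differences are spread among the $n-1$ nonzero elements of $G$. Hence the maximum multiplicity over nonzero $\Delta$ is at least the average $m\ell^2/(n-1)$, which yields $\epsilon^{*} \geq \ell/(n-1)$ and re-derives the R-optimality threshold stated just before the theorem.

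Finally I would extract the equivalence in both directions. The code is R-optimal exactly when $\epsilon^{*} = \ell/(n-1)$, i.e.\ when equality holds in the averaging bound; and equality forces every nonzero element of $G$ to occur with multiplicity exactly $\lambda := m\ell^2/(n-1)$ (an integer, since multiplicities are integers) in the combined multiset. This is verbatim the defining multiset equation of an $(n,m,\ell;\lambda)$-CEDF. Conversely, if $\mathcal{A}$ is a CEDF then each nonzero $\Delta$ has multiplicity $\lambda$, so $\Pr[\text{win} \mid \Delta] = \lambda/(m\ell) = \ell/(n-1)$ uniformly in $\Delta$, whence $\epsilon^{*} = \ell/(n-1)$ and the code is R-optimal. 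I do not anticipate a genuine obstacle; the only points requiring care are confirming that randomizing $\Delta$ cannot beat the best deterministic choice, and that disjointness keeps $0$ out of the difference multiset so that the correct averaging denominator is $n-1$ rather than $n$.
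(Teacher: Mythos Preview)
The paper does not supply its own proof of this theorem; it attributes the result to Veitch and Stinson \cite{VS} and states it without argument. Your proposal is correct and is exactly the natural counting/averaging argument one would expect: express the adversary's success probability for a fixed $\Delta$ as $\tfrac{1}{m\ell}$ times the multiplicity of $\Delta$ in the combined difference multiset, observe that the maximum multiplicity is at least the average $m\ell^2/(n-1)$, and note that equality (R-optimality) forces all nonzero multiplicities to equal $\lambda = m\ell^2/(n-1)$, which is precisely the CEDF condition. The two small care points you flag (randomization cannot beat the best deterministic $\Delta$, and disjointness excludes $0$ from the difference multiset) are handled correctly.
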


We give an example of a CEDF from \cite{VS}.
\begin{example} 
\label{ex1742}
The following four sets of size $2$ form a $(17,4,2,1)$-CEDF in $\zed_{17}$:
\[\A = (
\{1 ,16\},
\{9 ,8\},
\{13 ,4\},
\{15 ,2)\}).\]
This yields an (R-optimal) $1/8$-secure $(17,4,2)$-AMD code. $\blacksquare$
\end{example}

We recall a couple of theorems on CEDFs that can be found in \cite{VS}. 

\begin{theorem}
\cite{VS}
\label{thm2}
Suppose that $q = m\ell^2 +1$ is a prime power and $\alpha$ is a primitive element of ${\eff_q}$. 
Define $\beta = \alpha^{\ell}$
and let $H$ be the subgroup of ${\eff_q}^*$ of order $\ell m$ generated by $\beta$.
Then there is a $(q,m,\ell;1)$-CEDF in $\eff_q$ if 
\[\{ \beta - 1 , \beta^{m+1}-1, \dots , \beta^{(\ell-1)m+1} -1 \}\] is
a set of coset representatives of $H$ in ${\eff_q}^*$.
\end{theorem}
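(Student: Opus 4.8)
The plan is to realise the CEDF explicitly as a union of cosets of a small multiplicative subgroup, and then to translate the coset-representative hypothesis into the required external-difference condition.

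First I would set $K = \langle \beta^m \rangle$. Since $\beta = \alpha^{\ell}$ has order $m\ell$ in $\eff_q^*$, the subgroup $K$ has order $\ell$ and sits inside $H$ with index $[H:K]=m$; moreover $[\eff_q^*:K] = m\ell$. I would then define the candidate family by $A_j = \beta^j K$ for $j = 0, \dots, m-1$. Because $\beta K$ has order $m$ in the quotient $H/K$, the cosets $\beta^0 K, \dots, \beta^{m-1}K$ are pairwise distinct, so the $A_j$ are disjoint $\ell$-subsets; and since $\beta^m \in K$ we automatically have $A_{j+1 \bmod m} = \beta^{j+1}K$ for every $j$, so the cyclic wrap-around in the CEDF definition is built into the construction.

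Next I would compute the external differences. Writing a typical element of $A_j$ as $\beta^{j+sm}$ and of $A_{j+1\bmod m}$ as $\beta^{j+1+tm}$ (with $s,t \in \{0,\dots,\ell-1\}$), each difference factors as
\[
\beta^{j+1+tm} - \beta^{j+sm} = \beta^{j+sm}\left(\beta^{1+(t-s)m} - 1\right).
\]
Since $\beta^m$ has order $\ell$, the factor $\beta^{1+(t-s)m}-1$ depends only on $k := (t-s)\bmod \ell$, and as $k$ ranges over $\{0,\dots,\ell-1\}$ it runs through exactly the set $\{\beta-1, \beta^{m+1}-1, \dots, \beta^{(\ell-1)m+1}-1\}$ of the hypothesis. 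Collecting the differences for a fixed pair $(j,k)$ shows they form the single $K$-coset $(\beta^{1+km}-1)\,\beta^j K$. A counting step then makes the goal precise: there are $m\ell$ pairs $(j,k)$ and exactly $m\ell$ cosets of $K$ in $\eff_q^*$, each of size $\ell$, while the total number of differences is $m\ell^2 = q-1$. Hence the CEDF property with $\lambda = 1$ is equivalent to the claim that $(j,k)\mapsto (\beta^{1+km}-1)\beta^j K$ is a bijection onto the set of $K$-cosets.

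The crux — and the step I expect to be the main obstacle — is establishing this bijection, which I would handle through the subgroup tower $K \le H \le \eff_q^*$ in two layers. Since $\beta^j \in H$, the $H$-coset of $(\beta^{1+km}-1)\beta^j$ equals that of $\beta^{1+km}-1$ and so depends only on $k$; by hypothesis these $\ell$ representatives lie in the $[\eff_q^*:H]=\ell$ distinct $H$-cosets, realising each exactly once. For fixed $k$, varying $j$ multiplies $\beta^j K$ by the fixed nonzero scalar $\beta^{1+km}-1$, and since $\beta K$ generates $H/K$ the cosets $\beta^j K$ ($0 \le j \le m-1$) are precisely the $m$ distinct $K$-cosets inside $H$; thus the $m$ images fill out all $K$-subcosets of the single $H$-coset $(\beta^{1+km}-1)H$. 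Combining the layers, distinct $k$ give disjoint blocks (distinct $H$-cosets) while distinct $j$ give distinct $K$-cosets within a block, so all $m\ell$ cosets of $K$ are covered exactly once. This gives $\bigcup_{j} \D(A_{j+1\bmod m}, A_j) = \eff_q^* = G\setminus\{0\}$ with multiplicity one, which is exactly the $(q,m,\ell;1)$-CEDF condition. A minor point to dispatch is that each factor $\beta^{1+km}-1$ is nonzero — equivalently $1+km \not\equiv 0 \pmod{m\ell}$, which holds since $1+km \equiv 1 \pmod m$ and $m \ge 2$ — so that these elements genuinely lie in $\eff_q^*$.
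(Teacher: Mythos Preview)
Your argument is correct. The construction $A_j = \beta^j K$ with $K = \langle \beta^m\rangle$ is exactly the right one (and, reassuringly, it reproduces the paper's Example~\ref{ex1742} on the nose when $q=17$, $m=4$, $\ell=2$, $\alpha=3$). The two-layer coset argument through $K \le H \le \eff_q^*$ is clean and complete: the hypothesis distributes the $\ell$ values of $k$ among the $\ell$ cosets of $H$, and within each $H$-coset the $m$ values of $j$ sweep out the $m$ cosets of $K$. The nonvanishing check for $\beta^{1+km}-1$ is also handled correctly.

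There is nothing to compare against here, however: the paper does not give its own proof of this theorem. It is stated with a citation to \cite{VS} and used as a black box (in particular to derive Theorem~\ref{thm1} by specialising to $\ell=2$). So your write-up stands on its own, and since the example in the paper matches your family $A_j = \beta^j K$ exactly, your approach almost certainly coincides with the original argument in \cite{VS}.
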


The following theorem is obtained by taking $\ell = 2$ in Theorem \ref{thm2}. 

\begin{theorem}
\cite{VS}
\label{thm1}
Suppose that $q = 4m+1$ is a prime power and suppose there exists a primitive element $\alpha \in \eff_q$ such that $\alpha^4-1$ is a quadratic non-residue. Then there is a $(q,m,2;1)$-CEDF in $\eff_q$.
\end{theorem}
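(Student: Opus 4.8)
The plan is to derive Theorem \ref{thm1} as the special case $\ell = 2$ of Theorem \ref{thm2}, so essentially all the work lies in simplifying the coset-representative condition under this specialization. First I would set $\ell = 2$ and record the resulting data: we have $q = m\ell^2 + 1 = 4m+1$, $\beta = \alpha^2$, and $H = \langle \beta \rangle$ is the subgroup of $\eff_q^*$ of order $\ell m = 2m$. Since $\beta = \alpha^2$ generates exactly the nonzero squares, $H$ is the set of quadratic residues, and its index in $\eff_q^*$ is $(q-1)/(2m) = 2$. Thus $\eff_q^*$ has exactly two cosets of $H$, the quadratic residues and the quadratic non-residues.

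Next I would write out the candidate set of coset representatives. With $\ell = 2$ the set $\{\beta - 1, \beta^{m+1} - 1, \dots, \beta^{(\ell-1)m+1}-1\}$ collapses to $\{\beta - 1,\, \beta^{m+1} - 1\} = \{\alpha^2 - 1,\, \alpha^{2m+2} - 1\}$. The key simplification uses the fact that $\beta$ has order $2m$, so $\beta^m = \alpha^{2m}$ is the unique element of order $2$ in $\eff_q^*$, namely $\beta^m = -1$. Hence $\alpha^{2m+2} = \alpha^{2m}\alpha^2 = -\alpha^2$, and the second representative becomes $\alpha^{2m+2} - 1 = -(\alpha^2 + 1)$. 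The set to be tested is therefore $\{\alpha^2 - 1,\, -(\alpha^2 + 1)\}$.

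Because $H$ has index $2$, a two-element set $\{a,b\}$ is a set of coset representatives precisely when $a$ and $b$ lie in different cosets, i.e. when $ab$ is a quadratic non-residue (equivalently when $ab^{-1}$ is, since $b$ and $b^{-1}$ always share a coset as $b\cdot b = b^2 \in H$). I would then compute the product $(\alpha^2 - 1)\bigl(-(\alpha^2 + 1)\bigr) = -(\alpha^4 - 1)$, reducing the problem to deciding whether $-(\alpha^4-1)$ is a non-residue. The last ingredient, and the one place where the hypothesis $q \equiv 1 \pmod 4$ is essential, is that $-1 = \alpha^{(q-1)/2} = \alpha^{2m}$ is itself a quadratic residue; consequently $-(\alpha^4 - 1)$ is a non-residue if and only if $\alpha^4 - 1$ is. This is exactly the hypothesis of Theorem \ref{thm1}, so the set $\{\beta-1,\, \beta^{m+1}-1\}$ satisfies the condition of Theorem \ref{thm2}, and the claimed $(q,m,2;1)$-CEDF exists.

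I expect no serious obstacle here, as the argument is a direct specialization. The only point requiring care is the bookkeeping with signs: verifying $\beta^m = -1$, tracking the factor $-1$ in the product, and confirming that this factor is harmless because $-1$ is a square when $q \equiv 1 \pmod 4$. Getting this sign analysis right is what turns the raw condition ``$-(\alpha^4-1)$ is a non-residue'' into the clean stated hypothesis ``$\alpha^4 - 1$ is a non-residue.''
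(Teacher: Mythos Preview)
Your proposal is correct and is precisely the approach the paper indicates: the paper simply states that Theorem~\ref{thm1} is obtained by taking $\ell=2$ in Theorem~\ref{thm2}, and you have carried out exactly that specialization, correctly reducing the coset-representative condition to the quadratic non-residue hypothesis on $\alpha^4-1$.
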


We will prove many new results on CEDFs in Section \ref{CEDF.sec}, mainly concerning the case when $\lambda = 1$.
Our main results can be summarized as follows.

\begin{theorem} 
\label{main.thm}
\mbox{\quad} 
\begin{enumerate}
\item If $m$ is even and $\ell \geq 1$, then there exists an $(m \ell^2 + 1,m,\ell; 1)$-CEDF.
\item If $\ell$ and $m$ are both odd, then an $(m \ell^2 + 1,m,\ell; 1)$-CEDF does not exist.
\item If $\ell = 2$ and $4m+1$ is prime, then there exists an $(m \ell^2 + 1,m,\ell; 1)$-CEDF.
\end{enumerate}
\end{theorem}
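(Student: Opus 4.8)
The three parts point in different directions, but a single bookkeeping identity organises all of them. Summing the defining multiset equation of Definition~\ref{CEDF.def} over a putative CEDF $\A = \{A_0, \ldots, A_{m-1}\}$ and telescoping around the cycle gives
\[
\sum_{g \in G \setminus \{0\}} g \;=\; \sum_{j=0}^{m-1} \sum_{\substack{y \in A_{j+1 \bmod m} \\ x \in A_j}} (y-x) \;=\; \ell \sum_{j=0}^{m-1} \bigl( \Sigma_{j+1 \bmod m} - \Sigma_j \bigr) \;=\; 0,
\]
where $\Sigma_j = \sum_{x \in A_j} x$. So a CEDF can exist only if the nonzero elements of $G$ sum to $0$. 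In $\zed_n$ this sum is $0$ for $n$ odd and $n/2$ for $n$ even; since $n = m\ell^2+1$ is even exactly when $m$ and $\ell$ are both odd, this one identity both supplies the obstruction needed for Part~2 and confirms that Parts~1 and~3 are not ruled out on these grounds.

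For Part~1 I would realise the CEDF through a graceful labelling ($\alpha$-valuation) $f$ of the lexicographic product $C_m \cdot K_\ell^c$, whose $m\ell^2$ edges and $m$ consecutive columns $V_0, \ldots, V_{m-1}$ of $\ell$ vertices each mirror the CEDF structure. The graph is bipartite precisely because $m$ is even, which is exactly where the hypothesis enters: setting $A_j = f(V_j) \subseteq \zed_{m\ell^2+1}$, the $\alpha$-threshold fixes the sign of every consecutive difference, so the steps from an even column to an odd one contribute the edge labels with a $+$ sign and the steps from odd to even contribute them with a $-$ sign. The graceful property sends the absolute differences once through $\{1, \ldots, m\ell^2\}$, and the CEDF condition then amounts to the set of edge labels used on the odd steps being invariant under the reflection $d \mapsto (m\ell^2+1) - d$. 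Ensuring this reflection-symmetry is the delicate part of the labelling; I would construct a symmetric $\alpha$-valuation explicitly for each even $m$. When $\ell = 1$ the product collapses to the bare cycle $C_m$, which is graceful only for $m \equiv 0, 3 \pmod 4$, so for $m \equiv 2 \pmod 4$ I would instead give a direct ``zig-zag'' arrangement of $(A_j)$ whose only possible obstruction is the sum identity above, and that obstruction vanishes because $m$ is even.

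For Part~2 the plan is to read off non-existence directly from the identity: with $m$ and $\ell$ both odd, $n = m\ell^2+1$ is even, so in $\zed_n$ the nonzero elements sum to $n/2 \neq 0$, contradicting the telescoped value $0$. The point I would flag as the genuine obstacle here is that Definition~\ref{CEDF.def} allows an arbitrary abelian group $G$ of order $n$, and the nonzero elements of $G$ sum to $0$ as soon as $G$ has more than one involution (for instance when $G$ is elementary abelian of order $2^k$). To cover those groups the crude sum invariant is useless, and I would switch to characters: every nontrivial $\chi$ forces $\sum_{j} f_{j+1} \overline{f_j} = -1$ with $f_j = \sum_{x \in A_j} \chi(x)$, and for an order-$2$ character the $f_j$ are odd integers; extracting a contradiction from this (rather than from a naive parity count, which turns out to be consistent) is where the real work lies.

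For Part~3 ($\ell = 2$, $q = 4m+1$ prime) the efficient route is to reduce to Theorem~\ref{thm1}: its hypothesis is equivalent to the existence of a primitive $\alpha \in \eff_q$ with $\alpha^4 - 1$ a quadratic non-residue, so it suffices to prove that every prime $q \equiv 1 \pmod 4$ admits such an $\alpha$. I would count these elements with the quadratic character $\eta$, isolate the primitive ones by the usual M\"obius sieve over the divisors of $q-1$, and estimate the resulting sums $\sum_\alpha \eta(\alpha^4-1)\psi(\alpha)$ by Weil's bound; the main term then dominates for all large $q$, with the finitely many small primes settled by direct computation. I expect this to be the main obstacle of the whole theorem: keeping the main term ahead of the Weil error through the sieve is fiddly, and one must still make the underlying quartic cyclotomy explicit. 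An alternative that avoids the search for $\alpha$ is to write the difference condition directly in terms of the cyclotomic numbers of order~$4$ and verify, using their closed forms in the representation $q = a^2+b^2$, that the required residue pattern holds for every such prime.
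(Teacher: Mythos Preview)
Your plan lines up with the paper's own route: Theorem~\ref{main.thm} is proved there by deferring to Theorems~\ref{thm:CEDFm4}, \ref{thm:argh}, \ref{nonexist.thm} and~\ref{ell=2.thm}, and your three parts track these almost exactly.

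For Part~2, your telescoping identity is precisely the content of the paper's Eulerian-trail argument in Theorem~\ref{nonexist.thm}; you have simply stripped away the graph-theoretic packaging (summing around the closed trail is the telescoped sum $\ell\sum_j(\Sigma_{j+1}-\Sigma_j)=0$). Your subsequent worry about non-cyclic abelian groups is unnecessary: Section~\ref{CEDF.sec} works throughout in $\zed_{m\ell^2+1}$, and the paper's proof of Theorem~\ref{nonexist.thm} explicitly computes modulo $m\ell^2+1$. The extra character-theoretic machinery you contemplate is not part of the paper's claim.

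For Part~1, you and the paper use the same mechanism ($\alpha$-valuations of $C_m\boldsymbol{\cdot}K_\ell^c$ together with the key observation that the set of edge labels that change sign when one passes from the bipartite orientation to the cyclic orientation is closed under $d\mapsto -d$; this is exactly Corollary~\ref{cor:asdf} and Theorem~\ref{thm:CEDFm4}). One point to tighten: you flag the $m\equiv 2\pmod 4$ obstruction only at $\ell=1$, but the paper never obtains a genuine $\alpha$-valuation for this congruence class at \emph{any} $\ell$. Instead, Theorem~\ref{thm:argh} writes down a near-$\alpha$-valuation of $C_m$ in which one edge label is repeated but which still separates into $V^{\sf small}$ and $V^{\sf large}$; this is enough for the blow-ups of Theorems~\ref{thm:blowup1}--\ref{thm:blowup2} to go through and for the negation-closure argument to apply. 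Your ``zig-zag'' fix should be expected to serve for all $\ell$, not just $\ell=1$.

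For Part~3, your plan (reduce to Theorem~\ref{thm1}, establish the existence of a suitable primitive $\alpha$ for all large primes by a Weil-type estimate, then close the finite gap by computation) is exactly what Theorem~\ref{ell=2.thm} does: the paper cites the asymptotic bound from~\cite{VS}, valid for $q>7.867\times 10^{8}$, and reports a computer verification for the remaining primes $q\equiv 1\pmod 4$. So this is not the ``main obstacle'' you anticipate; the analytic step is already in the literature and the rest is a finite check.
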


We note that Theorem \ref{main.thm} is an immediate consequence of four theorems from Section \ref{CEDF.sec}, namely,
Theorems \ref{thm:CEDFm4}, \ref{thm:argh}, \ref{nonexist.thm} and \ref{ell=2.thm}.

\medskip

Now we proceed to the definitions for the related   ``strong'' versions of CEDF.

\begin{definition}[Strong circular AMD code]
Let $\mathcal{G}$ be an additive abelian group of order $n$ and $\mathcal{A} = \{ {A}_0,\dots, {A}_{m-1}\}$ be $m$ pairwise disjoint $\ell$-subsets of $\mathcal{G}$. Let $0 < \epsilon < 1$. Then $(\mathcal{G},\mathcal{A})$ is an  \emph{ $\epsilon$-secure strong circular $(n,m,\ell)$-AMD code} if an adversary cannot win the following \emph{strong circular AMD game} with probability greater than $\epsilon$.
\begin{enumerate}
\item The \emph{source} $i \in \{0,\dots,m-1\}$ is specified and given to the adversary.  \vspace{-.08in}
\item The adversary chooses a value $\Delta \in \mathcal{G} \setminus \{0\}$.   \vspace{-.08in}
\item The source is encoded by choosing $g$ uniformly at random from ${A}_i$. \vspace{-.08in}
\item The adversary wins if and only if $g + \Delta \in A_j$, where $j = i + 1 \bmod m$.  \vspace{-.12in}
\end{enumerate}
\end{definition}

\medskip

\begin{definition}[Strong circular external difference family (SCEDF)]
Let $G$ be an additive abelian group of order $n$. An $(n, m, \ell; \lambda)$-strong circular external difference family (or $(n, m, \ell; \lambda)$-SCEDF) is a set of $m$ disjoint $\ell$-subsets of $G$, say $\mathcal{A} = \{A_0,\dots,A_{m-1}\}$, such that the following multiset equation holds for every $j$, $0 \leq j \leq m-1$:
\[
\mathcal{D}(A_{j+1\bmod m}, A_j) = \lambda (G \setminus \{0\}).
\]
We observe that $\ell^2 = \lambda (n-1)$ if an $(n, m, \ell; \lambda)$-SCEDF exists.
\end{definition}

Similar to the case of weak circular $(n,m,\ell)$-AMD codes, an $\epsilon$-secure strong circular $(n,m,\ell)$-AMD code
has $\epsilon \geq {\ell}/({n-1})$. Again, if $\epsilon = {\ell}/({n-1})$, the code is \emph{R-optimal}.
Analogous to Theorem \ref{VSthm}, we have the following result that we state without proof.

\begin{theorem}
An R-optimal circular strong $(n,m,\ell)$-AMD code is equivalent to an $(n,m,\ell;\lambda)$-SCEDF.
\end{theorem}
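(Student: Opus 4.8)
The plan is to adapt the argument behind the weak analogue (Theorem \ref{VSthm}), focusing on the single feature that distinguishes the strong game from the weak one: the source $i$ is revealed to the adversary \emph{before} $\Delta$ is chosen. For an index $i$ and a nonzero $\Delta$, write $\mu_i(\Delta)$ for the multiplicity of $\Delta$ in the multiset $\mathcal{D}(A_{i+1 \bmod m}, A_i)$; equivalently $\mu_i(\Delta) = |\{g \in A_i : g + \Delta \in A_{i+1 \bmod m}\}|$. Since the source is supplied first and the adversary then selects $\Delta$ to maximise the conditional success probability, and since the guarantee must hold for every specified source, the optimal winning probability is
\[ \epsilon = \max_{0 \le i \le m-1} \ \max_{\Delta \in \mathcal{G} \setminus \{0\}} \frac{\mu_i(\Delta)}{\ell}. \]
This is the essential structural difference from the weak case, where $\Delta$ is fixed first and the source is only then drawn at random, so that the weak success probability involves the \emph{average} $\frac{1}{m\ell}\sum_i \mu_i(\Delta)$ and hence the \emph{union} of the difference multisets.

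Next I would establish the R-optimality threshold by counting. For each $i$, the sets $A_i$ and $A_{i+1 \bmod m}$ are disjoint $\ell$-subsets, so $\mathcal{D}(A_{i+1 \bmod m}, A_i)$ is a multiset of exactly $\ell^2$ \emph{nonzero} elements of $\mathcal{G}$, i.e.\ $\sum_{\Delta \ne 0} \mu_i(\Delta) = \ell^2$. Averaging over the $n-1$ nonzero group elements gives $\max_{\Delta \ne 0} \mu_i(\Delta) \ge \ell^2/(n-1)$ for every $i$, whence $\epsilon \ge \ell/(n-1)$, recovering the stated bound.

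Finally I would analyse the equality case in both directions. If the code is R-optimal, then $\epsilon = \ell/(n-1)$ forces $\max_{\Delta \ne 0}\mu_i(\Delta) \le \ell^2/(n-1)$ for every $i$; combined with the averaging lower bound, for each $i$ the maximum of $\mu_i(\cdot)$ equals its average, so $\mu_i(\Delta) = \ell^2/(n-1)$ for \emph{all} nonzero $\Delta$. Setting $\lambda = \ell^2/(n-1)$ (necessarily a positive integer, consistent with the relation $\ell^2 = \lambda(n-1)$ recorded in the definition), this says $\mathcal{D}(A_{i+1 \bmod m}, A_i) = \lambda (G \setminus \{0\})$ for every $i$, which is exactly the SCEDF condition. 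Conversely, if $\mathcal{A}$ is an $(n,m,\ell;\lambda)$-SCEDF, then $\mu_i(\Delta) = \lambda = \ell^2/(n-1)$ for every $i$ and every nonzero $\Delta$, so $\epsilon = \ell/(n-1)$ and the code is R-optimal. The only delicate point is the bookkeeping in the displayed success probability, namely confirming that ``specified and given to the adversary'' yields a maximum over sources rather than an average, and therefore a per-index condition; once that is pinned down, the equality analysis of the averaging bound is routine.
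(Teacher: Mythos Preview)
Your argument is correct. The paper, however, does not actually prove this theorem: it introduces it with the phrase ``we have the following result that we state without proof,'' so there is no in-paper argument to compare against. Your write-up is exactly the expected adaptation of the weak case (Theorem~\ref{VSthm}): compute the conditional success probability $\mu_i(\Delta)/\ell$, observe that revealing $i$ first turns the optimisation into $\max_i\max_{\Delta\ne 0}\mu_i(\Delta)/\ell$, apply the pigeonhole/averaging bound $\sum_{\Delta\ne 0}\mu_i(\Delta)=\ell^2$ to get $\epsilon\ge \ell/(n-1)$, and characterise equality as $\mu_i(\Delta)\equiv \ell^2/(n-1)$, i.e.\ the SCEDF condition. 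Nothing is missing; the one point you flagged (that ``specified and given to the adversary'' forces a maximum over $i$ rather than an average) is indeed the only place where care is needed, and your reading matches the intended semantics.
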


SCEDFs were defined in \cite{VS}. As we mentioned in Section \ref{intro.sec}, SCEDF with $m > 2$ do not exist. In Section \ref{strong.sec}, we give some constructions for near-optimal strong circular $(n,m,\ell)$-AMD codes.

Finally, for future use, we recall some definitions of ``noncircular'' external difference families.

\begin{definition}
Let $G$ be an additive abelian group of order $n$. Suppose $m \geq 2$. 
An {\em $(n, m, \ell; \lambda)$-external difference family} (or {\em $(n, m, \ell; \lambda)$-EDF}) is a set of $m$ disjoint $\ell$-subsets of $G$, say $\mathcal{A} = (A_0,\dots,A_{m-1})$, such that the following multiset equation holds:
\[
\bigcup_{i}\bigcup_{j\neq i} \mathcal{D}(A_i, A_j) = \lambda (G \setminus \{0\}).
\]
\end{definition}

\begin{definition}[\cite{PS16}]
Let $G$ be an additive abelian group of order $n$. Suppose $m \geq 2$. 
An {\em $(n, m, \ell; \lambda)$-strong external difference family} (or {\em $(n, m, \ell; \lambda)$-SEDF}) is a set of $m$ disjoint $\ell$-subsets of $G$, say $\mathcal{A} = (A_0,\dots,A_{m-1})$, such that  the following multiset equation holds for each $i\in\{0,1,\dotsc,m-1\}$:
\[
\bigcup_{j\neq i} \mathcal{D}(A_i, A_j) = \lambda (G \setminus \{0\}).
\]
\end{definition}
\begin{definition}
An {\em $(n,m;\ell_1,\dotsc,\ell_m;\lambda_1,\dotsc,\lambda_m)$-GSEDF} (or {\em generalised strong EDF}) is a set of $\ell$ disjoint subsets $A_1,A_2,\dotsc,A_m$ of an abelian group $G$ of order $n$ such that $|A_i|=\ell_i$ for $1 \leq i \leq m$, and such that, for each $1 \leq i \leq m$, it holds that
\begin{align*}
\bigcup_{j \neq i} \mathcal{D}(A_i,A_j)=\lambda_i (G \setminus \{0\}).
\end{align*}

\end{definition}

\section{Novel Graph Labellings and CEDFs with $\lambda=1$}
\label{CEDF.sec}

If we are given some type of difference family or external difference family, we can associate with it a graph in which the vertices are identified with the elements of the sets of the difference family, and the edges correspond to pairs of elements whose differences we wish to consider.  In the case of a difference set, for example, the corresponding graph would be a complete graph, whereas for an $(n,m,\ell;1)$-external difference family it would be complete multipartite graph that has $m$ independent sets of size $\ell$.  When a (circular) external difference family exists in $\mathbb{Z}_n$, there are natural connections with {\em graceful labellings} (see \cite{ejcsurvey} for a comprehensive survey of graph labellings), which can potentially be exploited in the construction of (circular) external difference families.

\subsection{$\alpha$-valuations}\label{sec:alpha}
In this section we consider a class of graph labellings that do not directly yield CEDFs in general, but whose structure will inform the constructions we give in Section~\ref{sec:cedfcon}.
\begin{definition}[\cite{rosa}]
A {\em $\beta$-valuation} of a graph $G$ with $n$ edges is a one-to-one map of the vertices into the set of integers $\{0,1,\dotsc,n\}$, such that, if we label each edge by the absolute value of the differences of the labels of the corresponding vertices, then the resulting edge labels are precisely the elements of the set $\{1,2,\dotsc,n\}$.  (This is also known as a {\em graceful labelling}.)
\end{definition}
\begin{example}
Here is an example of a $\beta$-valuation for the graph $C_3$:
\begin{center}
\begin{tikzpicture}[main/.style = {draw, circle}] 
\node[main] (1) {$0$};
\node[main] (2) [above right of=1]{$1$};
\node[main] (3) [below right of=2]{$3$};
 \draw (1)--(2);
 \draw (3)--(2);
 \draw (1)--(3);
\end{tikzpicture}
\end{center}
\end{example}
\begin{definition}[\cite{rosa}]
An {\em $\alpha$-valuation} of a graph $G$ with $n$ edges is a $\beta$-valuation that satisfies the additional condition that there is there is some value $x$ with $0\leq x\leq n$ such that each edge is incident with one vertex whose label is at most $x$, and one whose label is greater than $x$.
\end{definition}
A graph with an $\alpha$-valuation is necessarily bipartite, with the partition given by the set $V^{\sf large}$, which consists  of vertices whose labels are larger than $x$, and $V^{\sf small}$, which consists of vertices whose labels are at most $x$.
\begin{example}\label{ex:C4}
Here is an example of an $\alpha$-valuation for the graph $C_4$, with $x=1$:
\begin{center}
\begin{tikzpicture}[main/.style = {draw, circle}] 
\node[main] (1) {$0$};
\node[main] (2) [right of=1]{$4$};
\node[main] (3) [below of=2]{$1$};
\node[main] (4) [below of=1]{$2$};
 \draw (1)--(2);
 \draw (3)--(2);
 \draw (4)--(3);
 \draw (4)--(1);
\end{tikzpicture}
\end{center}
We see that $V^{\sf small}$ contains the vertices with labels $\{0,1\}$ and $V^{\sf large}$ contains the vertices with labels $\{2,4\}$.
\end{example}


The following theorem illustrates a connection between graph labellings of this type and certain strong external difference families.

\begin{theorem}\label{thm:SEDF}
An $\alpha$-valuation of the complete bipartite graph $K_{\ell,\ell}$ implies the existence of an $(\ell^2+1,2,\ell,1)$-SEDF in $\mathbb{Z}_{\ell^2+1}$.
\end{theorem}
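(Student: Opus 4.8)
The plan is to read the two blocks of the SEDF directly off the bipartition classes of the $\alpha$-valuation, and to use the graceful edge labels to verify the single required difference condition.

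First I would fix the parameters. The graph $K_{\ell,\ell}$ has exactly $\ell^2$ edges, so an $\alpha$-valuation is an injection of its $2\ell$ vertices into $\{0,1,\dots,\ell^2\}$ whose induced edge labels (absolute differences of endpoint labels) are precisely $\{1,2,\dots,\ell^2\}$, each occurring once. The $\alpha$-condition supplies a threshold $x$ splitting the vertices into $V^{\sf small}$ (labels at most $x$) and $V^{\sf large}$ (labels greater than $x$), with every edge joining the two parts. I would next argue that this small/large bipartition coincides with the natural bipartition of $K_{\ell,\ell}$: since $K_{\ell,\ell}$ is connected and complete bipartite, each vertex is adjacent to every vertex of the opposite part, which forces each of the two natural parts to be monochromatic under the small/large split; hence the two classes are exactly the two sides of $K_{\ell,\ell}$ and $|V^{\sf small}| = |V^{\sf large}| = \ell$.

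Then I would set $A_1$ to be the set of labels appearing on $V^{\sf small}$ and $A_0$ the set of labels on $V^{\sf large}$, regarded now as elements of $\mathbb{Z}_{\ell^2+1}$. These are disjoint $\ell$-subsets because the valuation is injective and all labels lie in $\{0,\dots,\ell^2\}$, giving distinct residues modulo $\ell^2+1$. Because the graph is complete bipartite, every pair $(a,b)$ with $a \in A_1$, $b \in A_0$ is an edge, so the multiset $\{\,b - a : a \in A_1,\ b \in A_0\,\}$ runs over all $\ell^2$ edge labels; as each such $b$ is large and each $a$ is small we have $b > a$, so the absolute value is simply $b-a$, and this multiset equals $\{1,2,\dots,\ell^2\}$, which is exactly $\mathbb{Z}_{\ell^2+1}\setminus\{0\}$. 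This establishes $\mathcal{D}(A_0,A_1) = \mathbb{Z}_{\ell^2+1}\setminus\{0\}$; taking negatives, an operation under which $\mathbb{Z}_{\ell^2+1}\setminus\{0\}$ is invariant, yields $\mathcal{D}(A_1,A_0) = \mathbb{Z}_{\ell^2+1}\setminus\{0\}$ as well. Both SEDF conditions then hold with $\lambda = 1$, so $\{A_0,A_1\}$ is an $(\ell^2+1,2,\ell,1)$-SEDF.

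The argument is essentially a translation rather than a computation, so I do not expect a serious obstacle. The one point requiring care is the alignment of the two bipartitions, which guarantees the block sizes are both $\ell$, together with the observation that reducing labels modulo $\ell^2+1$ introduces no wraparound: the relevant differences already lie in $\{1,\dots,\ell^2\}$ and avoid $0$, so the graceful property transfers verbatim to the external difference condition.
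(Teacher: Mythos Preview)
Your proof is correct and follows essentially the same approach as the paper: both read the two SEDF blocks off the $\alpha$-valuation's bipartition $V^{\sf small}$, $V^{\sf large}$, observe that the edge differences $b-a$ with $b$ large and $a$ small hit each of $1,\dots,\ell^2$ exactly once, and then negate to get the second direction. Your write-up is a little more careful than the paper's in explicitly verifying that the small/large split coincides with the natural bipartition of $K_{\ell,\ell}$ (hence both sides have size $\ell$), which the paper leaves implicit.
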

\begin{proof}
The graph $K_{\ell,\ell}$ has $\ell^2$ edges.  The edge labels come from taking the absolute value of the differences between their incident vertices.  In the case of a graph with an $\alpha$-valuation, the positive differences occur as $v-w$ for some label $v$ of a vertex in $V^{\sf large}$ and some label $w$ of a vertex in $V^{\sf small}$.  Since each value from $1$ up to $\ell^2$ occurs once as one of these differences, we see that $\mathcal{D}(V^{\sf large},V^{\sf small})=\mathbb{Z}_{\ell^2+1} \setminus \{0\}$. Therefore we also have $\mathcal{D}(V^{\sf small},V^{\sf large})=\mathbb{Z}_{\ell^2+1} \setminus \{0\}$, and the sets of labels of the vertices in $V^{\sf large}$ and $V^{\sf small}$ respectively form the two sets in the desired SEDF.
\end{proof}
Analogously, an $\alpha$-valuation of $K_{a,b}$ implies the existence of a $(K_{ab+1},a,b;1;1)$-GSEDF.

\medskip

The construction of SEDFs with $m=2$ and $\lambda=1$ given in \cite{PS16} can be viewed as an example of Theorem~\ref{thm:SEDF} being applied to an $\alpha$-valuation for complete bipartite graphs that appears in \cite{rosa}.  
\begin{remark}
{\rm 
Not every SEDF with $m=2$ arises from an $\alpha$-valuation in this way.  For example, consider the $(17,2,4,1)$-SEDF (from \cite{HucJefNep}) having sets $\{1,4,13,16\}$ and $\{2,8,9,15\}$ .  The elements of the second set interleave the elements of the first set. Therefore, even if we translate all elements by adding a common element of $\mathbb{Z}_{17}$, it is not possible to ensure that all elements of the first set are smaller than those of the second set when they are regarded as integers.
However, the elements in the first set are all one more than a multiple of 3.  If we subtract 1 from all elements of both sets, then multiply them all by 6 (the inverse of 3 mod 17) then we get the sets $\{0,1,4,5\},\{6,8,14,16\}$, which does come from an $\alpha$-valuation, i.e., this SEDF is \emph{equivalent} to one arising from an $\alpha$-valuation.
}
\end{remark}

\subsection{A Blow-up Construction}
We now consider a technique that allows us to take a graph with an $\alpha$-valuation and construct larger graphs that also have $\alpha$-valuations.
\begin{definition}
A {\em blow-up} of a graph $G$ is a graph constructed by replacing every vertex of $G$ with a finite collection of copies of that vertex, with any two copies of distinct vertices being joined by an edge if and only if the corresponding vertices were adjacent in $G$.
\end{definition}
\begin{theorem}\label{thm:blowup1}
Let $G$ be a graph with with $n$ edges that has an $\alpha$-valuation, and let $\ell$ be a positive integer. The blow-up obtained by replacing each vertex in $V^{\sf large}$ by a set of $\ell$ vertices has an $\alpha$-valuation.
\end{theorem}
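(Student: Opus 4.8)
The plan is to build an explicit $\alpha$-valuation of the blow-up directly from the one we are given. Write $f$ for the $\alpha$-valuation of $G$ and let $x$ be its threshold, so that $f(V^{\sf small}) \subseteq \{0,\dots,x\}$, $f(V^{\sf large}) \subseteq \{x+1,\dots,n\}$, and the $n$ edge differences (each of the form $f(v)-f(u)$ with $v \in V^{\sf large}$, $u \in V^{\sf small}$) run over exactly $\{1,\dots,n\}$. Since $G$ is bipartite with all edges going between the two parts, the blow-up has no edges among the copies of a single large vertex, and each original edge becomes $\ell$ edges, so the blow-up has $n\ell$ edges. In the blow-up I would leave each small vertex with its original label, and replace each large vertex $v$ with label $b=f(v)$ by copies $v^{(0)},\dots,v^{(\ell-1)}$, assigning the label $b+kn$ to $v^{(k)}$. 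The claim is that this is an $\alpha$-valuation, with the same threshold $x$.

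The heart of the argument is the difference computation, which I expect to stratify cleanly by the copy index. An original edge joining a small vertex labelled $a$ to a large vertex labelled $b$ gives rise, in the blow-up, to the $\ell$ edges joining $a$ to the copies $v^{(k)}$, whose labels are $b+kn$; these edges carry differences $(b-a)+kn$. For a fixed $k$, letting the edge range over all of $G$ makes $b-a$ sweep out $\{1,\dots,n\}$, so the differences with that fixed $k$ are precisely $\{kn+1,\dots,(k+1)n\}$. Taking the union over $k=0,\dots,\ell-1$ then yields exactly $\{1,\dots,n\ell\}$, as required. Because there are $n\ell$ edges and they realise all $n\ell$ values in $\{1,\dots,n\ell\}$, each value occurs exactly once.

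The remaining points are the injectivity, the range, and the threshold condition, and this is where I would take the most care. The small labels are unchanged and lie in $\{0,\dots,x\}$; every new large label satisfies $b+kn \geq b \geq x+1 > x$, so it is distinct from every small label and the threshold $x$ still separates the two sides of each edge. For injectivity on the large side, suppose $b+kn = b'+k'n$; then $b-b' = (k'-k)n$, but $b,b' \in \{x+1,\dots,n\}$ forces $|b-b'| \leq n-x-1 < n$, so $k=k'$ and $b=b'$. Finally the labels stay within range, since $b+kn \leq n+(\ell-1)n = n\ell$. The main obstacle, such as it is, is exactly this collision-and-range check: it hinges on the observation that the original large labels occupy an interval of length strictly less than $n$, so shifting them by the distinct multiples $0,n,2n,\dots,(\ell-1)n$ produces $\ell$ disjoint blocks inside $\{0,\dots,n\ell\}$. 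With that in hand, all three conditions of an $\alpha$-valuation hold and the proof is complete.
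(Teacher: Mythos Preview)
Your proof is correct and complete, but the explicit labelling you construct is genuinely different from the paper's.  The paper first multiplies every label by $\ell$ and then, for each large vertex (now labelled $\ell i$), creates $\ell$ copies with labels $\ell i,\ell i-1,\dots,\ell i-(\ell-1)$; you instead leave the small labels untouched and give the copies of a large vertex labelled $b$ the labels $b,b+n,\dots,b+(\ell-1)n$.  Both are legitimate $\alpha$-valuations of the same blow-up graph but are not the same labelling in general (already for the $C_4$ example with $\ell=2$ they disagree).  The paper's scheme places the large-side labels in short consecutive blocks of length $\ell$, which is convenient for the later CEDF constructions where those blocks become the sets of the difference family; your scheme stratifies the edge differences cleanly into the $\ell$ intervals $\{kn+1,\dots,(k+1)n\}$, which makes the $\beta$-valuation check particularly transparent and the injectivity/range argument routine.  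Either route proves the theorem as stated.
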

\begin{proof}
Let $G^\prime$ be the labelled graph obtained by multiplying each of the labels on the vertices of $G$ by $\ell$.  The edge labels of this graph are precisely the multiples of $\ell$ from $1$ up to $n\ell$.  Now we create a labelled graph $G^{\prime\prime}$ by blowing up $G^{\prime}$: for every vertex of $V^{\sf large}$ with label $\ell i$, we replace it by $\ell$ copies with the labels $\{\ell i,\ell i-1,\dotsc,\ell i-(\ell-1)\}$.  By construction the vertex labels of $G^{\prime\prime}$ are all distinct.  The result of this blow-up is that each edge of $G^\prime$ is replaced by $\ell$ edges in $G^{\prime\prime}$.  If an edge of $G^\prime$ had label $j\ell$ then the new edges have labels $j\ell,j\ell-1,\dotsc,j\ell-(\ell-1)$.  Thus $G^{\prime\prime}$ contains $\ell n$ edges labelled with the values $1,2,\dotsc,\ell n$, so this labelling is a $\beta$-valuation. We observe that if $x$ is the largest label of any vertex of $V^{\sf small}$ in $G$ then $\ell x$ is the largest label  of any vertex of $V^{\sf small}$ in $G^{\prime\prime}$.  The label of each vertex of $V^{\sf large}$ in $G^\prime$ was at least $\ell(x+1)$.  Thus the labels of any copies in $G^{\prime\prime}$ are at least $\ell x+1$ by construction, and so this labelling is an $\alpha$-valuation.
\end{proof}
\begin{theorem}\label{thm:blowup2}
Let $G$ be a graph with with $n$ edges that has an $\alpha$-valuation, and let $\ell$ be a positive integer. The blow-up obtained by replacing each vertex in $V^{\sf small}$ by a set of $\ell$ vertices has an $\alpha$-valuation.
\end{theorem}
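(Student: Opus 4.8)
The plan is to mirror the argument for Theorem~\ref{thm:blowup1}, with the roles of addition and subtraction interchanged, because now the blown-up vertices sit on the \emph{subtracted} side of each edge difference. As before, I would first rescale: let $G'$ be the labelled graph obtained by multiplying every vertex label of $G$ by $\ell$, so that the edge labels of $G'$ are exactly the multiples $\ell, 2\ell, \dots, n\ell$, each arising as $\ell v - \ell w$ for a large label $v$ and a small label $w$.

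The key step is choosing the copy labels correctly. Since in the blow-up a small vertex is the one being subtracted, to split the edge labelled $\ell(v-w) = j\ell$ into the consecutive values $j\ell, j\ell-1, \dots, j\ell-(\ell-1)$ I would replace each small vertex $\ell w$ by the $\ell$ copies $\ell w, \ell w+1, \dots, \ell w+(\ell-1)$ (counting \emph{upward} from $\ell w$, rather than downward as in Theorem~\ref{thm:blowup1}). Then an edge from the large vertex $\ell v$ to the copy $\ell w + t$ has label $\ell v - (\ell w + t) = j\ell - t$, and as $t$ runs over $0, \dots, \ell-1$ and $j$ over $1, \dots, n$ these labels sweep out $1, 2, \dots, n\ell$ exactly once, giving a $\beta$-valuation.

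Next I would verify injectivity of the labelling: the copies of distinct small vertices occupy the disjoint blocks $[\ell w, \ell w + \ell - 1]$ of $\ell$ consecutive integers anchored at distinct multiples of $\ell$, so they collide neither with one another nor with the (unchanged) large labels $\ell v$. Finally, for the $\alpha$-property I would take the threshold $x' = \ell(x+1)-1$, where $x$ is the threshold of $G$: every small copy has label at most $\ell x + (\ell-1) = x'$, while every large vertex has label at least $\ell(x+1) > x'$, so each edge indeed meets one vertex of each type.

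The one place the argument genuinely differs from Theorem~\ref{thm:blowup1} is the direction of the offsets and the resulting separation between the top of the small block, $\ell x + \ell - 1$, and the bottom large label, $\ell(x+1) = \ell x + \ell$; this gap of exactly one is what simultaneously guarantees no label collision and a valid $\alpha$-threshold, so getting that direction right is the main thing to be careful about. Checking $0 \le x' \le n\ell$ is then routine once one observes that the edge labelled $n$ must join labels $0$ and $n$, forcing $n \in V^{\sf large}$ and hence $x \le n-1$.
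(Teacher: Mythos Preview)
Your proof is correct and follows exactly the approach the paper intends: the paper's own proof is a single sentence stating that the argument is analogous to Theorem~\ref{thm:blowup1} with the copies of a small vertex labelled $\ell i, \ell i+1,\dotsc,\ell i+\ell-1$, and you have simply filled in the details of that analogy (edge-label computation, injectivity, and the threshold $x'=\ell(x+1)-1$). There is nothing to change.
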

\begin{proof}
This proof is analogous to that of Theorem~\ref{thm:blowup1}, except that the labels of the copies of a vertex of $V^{\sf small}$ with label $\ell i$ are given the labels $\ell i,\ell i+1,\dotsc,\ell i+\ell-1$.
\end{proof}

If we apply the constructions of Theorem~\ref{thm:blowup1} and Theorem~\ref{thm:blowup2} in turn to a graph $G$, then the resulting graph is the lexicographic product $G \boldsymbol{\cdot} K_{\ell}^c$. Therefore we have the following theorem.

\begin{theorem}
\label{lex.thm}
Suppose a graph $G$ has an $\alpha$-valuation, and let $\ell \geq 2$. Then $G \boldsymbol{\cdot} K_{\ell}^c$ has an $\alpha$-valuation.
\end{theorem}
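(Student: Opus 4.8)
The plan is to obtain the $\alpha$-valuation of $G \boldsymbol{\cdot} K_{\ell}^c$ by composing the two blow-up operations of Theorems~\ref{thm:blowup1} and~\ref{thm:blowup2}, exactly as indicated in the paragraph preceding the statement. First I would record the purely graph-theoretic observation that $G \boldsymbol{\cdot} K_{\ell}^c$ is precisely the blow-up of $G$ in which \emph{every} vertex is replaced by $\ell$ copies. Indeed, since $K_{\ell}^c$ has no edges, two vertices $(u,i)$ and $(u',j)$ of the lexicographic product are adjacent if and only if $u$ and $u'$ are adjacent in $G$, which is exactly the defining adjacency rule for the full blow-up (and in particular the $\ell$ copies of a single vertex form an independent set).

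Next I would apply Theorem~\ref{thm:blowup1} to $G$, producing a labelled graph $G_1$ with an $\alpha$-valuation in which each vertex of $V^{\sf large}$ has been replaced by $\ell$ copies. The crucial bookkeeping step is to identify the bipartition of $G_1$: from the proof of Theorem~\ref{thm:blowup1}, the (rescaled) original small vertices retain labels at most $\ell x$, while every blown-up copy of a large vertex receives a label at least $\ell x + 1$. Hence the small part of $G_1$ is exactly the image of the original $V^{\sf small}$. I would then apply Theorem~\ref{thm:blowup2} to $G_1$, blowing up each vertex of this small part by $\ell$. Because that small part consists of precisely the original small vertices (left untouched by the first blow-up), after this second step every vertex of $G$, large and small alike, has been replaced by $\ell$ copies, and the resulting graph inherits an $\alpha$-valuation by Theorem~\ref{thm:blowup2}.

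To finish, I would verify that this two-step construction reproduces the adjacency of the lexicographic product: a copy of a large vertex $u$ and a copy of a small vertex $w$ are joined in the composite if and only if $u$ and $w$ were adjacent in $G$ (the adjacency created in $G_1$ is inherited by all copies $w_1,\dots,w_\ell$ when $w$ is blown up), while copies of any single vertex remain mutually non-adjacent. Together with the first paragraph, this shows the composite graph is exactly $G \boldsymbol{\cdot} K_{\ell}^c$, completing the proof. I expect the only real subtlety to be this identification of the small part of $G_1$ with the original $V^{\sf small}$, ensuring the second blow-up acts on precisely the vertices the first one omitted; the existence of the $\alpha$-valuation at each stage is then an immediate appeal to the two blow-up theorems.
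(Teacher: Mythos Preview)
Your proposal is correct and follows exactly the approach the paper takes: apply Theorem~\ref{thm:blowup1} and then Theorem~\ref{thm:blowup2} in turn, and observe that the resulting graph is $G \boldsymbol{\cdot} K_{\ell}^c$. The extra bookkeeping you supply---identifying the small part of the intermediate graph $G_1$ with the original $V^{\sf small}$ and verifying the adjacency matches that of the lexicographic product---is precisely the detail the paper leaves implicit in the sentence preceding the theorem.
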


\begin{example}\label{ex:bipartitealpha}
Consider the bipartite graph $K_{1,1}$.  It has the following $\alpha$-valuation:
\begin{center}
\begin{tikzpicture}[main/.style = {draw, circle}] 
\node[main] (1) {$0$};
\node[main] (2) [right of=1]{$1$};
 \draw (1)--(2);
\end{tikzpicture}
\end{center}

If we apply the blow-up described in Theorem~\ref{thm:blowup1} with $\ell=3$, the result is the following $\alpha$-valuation of $K_{1,3}$:
\begin{center}
\begin{tikzpicture}[main/.style = {draw, circle}] 
\node[main] (1) {$0$};
\node[main] (2) [above right of=1]{$1$};
\node[main] (3) [right of=1]{$2$};
\node[main] (4) [below right of=1]{$3$};
 \draw (1)--(2);
 \draw (3)--(1);
 \draw (1)--(4);
\end{tikzpicture}
\end{center}
If we then apply the blow-up described in Theorem~\ref{thm:blowup2} with $\ell=3$, we obtain an $\alpha$-valuation of $K_{3,3}$:
\begin{center}
\begin{tikzpicture}[main/.style = {draw, circle}] 
\node[main] (1) {$0$};
\node[main] (2) [below of=1]{$1$};
\node[main] (3) [below of=2]{$2$};
\node[main] (4) [right of=1]{$3$};
\node[main] (5) [right of=2]{$6$};
\node[main] (6) [right of=3]{$9$};
 \draw (1)--(4);
 \draw (1)--(5);
 \draw (1)--(6);
\draw (2)--(4);
 \draw (2)--(5);
 \draw (2)--(6);
 \draw (3)--(4);
 \draw (3)--(5);
 \draw (3)--(6);
\end{tikzpicture}
\end{center}
We observe that if we carry out this process with general $\ell$ we obtain an $\alpha$-valuation for $K_{\ell,\ell}$.  Applying Theorem~\ref{thm:SEDF} to this $\alpha$-valuation gives rise to the construction of an $(\ell^2+1,2,\ell;1)$-SEDF from \cite{PS16}.

In \cite{HucJefNep}, Huczynska, Jefferson and Nep\v{s}insk\'{a} describe a recursive construction for SEDFs and GSEDFs with $m=2$ and $\lambda=1$.  These can be seen as arising from performing a sequence of the blow-up operations described above. 
\end{example}
\subsection{CEDFs Arising From Graceful Digraphs}
\label{sec:cedfcon}
In \cite{BloomHsu82,BloomHsu85}, Bloom and Hsu consider graceful labellings for directed graphs (we note that in this paper, the digraphs we consider do not contain loops). They give the following definition:
\begin{definition}[\cite{BloomHsu82,BloomHsu85}]
A {\em graceful labelling} of a digraph $G$ with $n$ arcs is a mapping from the vertices of $G$ into the set $\{0,1,\dotsc,n\}$ such that, if we label each arc by the label on its head minus the label on its tail (modulo $n+1$), then each nonzero element of $\mathbb{Z}_{n+1}$ occurs exactly once as an arc label.  A digraph $G$ is a {\em graceful digraph} if it has a graceful labelling.
\end{definition}
\begin{lemma}[\cite{BloomHsu85}]
A $\beta$-valuation for a graph $G$ is also a graceful labelling for the digraph obtained from $G$ by directing each edge from the incident vertex of smaller label to the incident vertex of larger label.
\end{lemma}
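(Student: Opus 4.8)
The plan is to check directly that the very same vertex map furnished by the $\beta$-valuation satisfies the defining condition of a graceful labelling of the oriented graph. The first thing to note is that both definitions demand a one-to-one map of the vertices into $\{0,1,\dots,n\}$, so no relabelling is required; all that remains is to verify that the induced \emph{arc} labels are exactly the nonzero elements of $\mathbb{Z}_{n+1}$, each occurring precisely once.

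First I would fix an edge $\{u,v\}$ of $G$ and write $f$ for the $\beta$-valuation. By the construction of the digraph, this edge is oriented from the endpoint of smaller label to the endpoint of larger label; say $f(u) < f(v)$, so the resulting arc has tail $u$ and head $v$. Its arc label is then (head) $-$ (tail) $\equiv f(v) - f(u) \pmod{n+1}$. The crucial observation is that this modular reduction is vacuous: since $f(u),f(v) \in \{0,1,\dots,n\}$ with $f(u) < f(v)$, the integer $f(v) - f(u)$ already lies in $\{1,2,\dots,n\}$, and moreover $f(v) - f(u) = |f(v) - f(u)|$, which is exactly the edge label that the $\beta$-valuation assigns to $\{u,v\}$.

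Finally I would invoke the defining property of the $\beta$-valuation, namely that the edge labels $|f(v)-f(u)|$ take each value in $\{1,2,\dots,n\}$ exactly once. By the preceding paragraph these agree identically with the arc labels, and $\{1,2,\dots,n\}$ is precisely the set of nonzero elements of $\mathbb{Z}_{n+1}$. Hence each nonzero element of $\mathbb{Z}_{n+1}$ occurs exactly once as an arc label, which is what it means for $f$ to be a graceful labelling of the digraph. There is no genuine obstacle in this argument: the entire content is the recognition that orienting every edge from the smaller label to the larger label forces the head-minus-tail difference to be positive and to lie in range, so it coincides with the absolute-value difference of the $\beta$-valuation and no wraparound modulo $n+1$ ever takes place.
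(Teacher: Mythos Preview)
Your proof is correct. The paper does not actually supply its own proof of this lemma: it is stated as a citation to \cite{BloomHsu85} and left unproved. Your argument is the natural direct verification---orienting each edge toward the larger label makes the head-minus-tail difference a positive integer in $\{1,\dots,n\}$, so it coincides with the absolute-value edge label and no reduction modulo $n+1$ occurs---and this is exactly the intended observation behind the statement.
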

\begin{lemma}\label{ex:alphadigraph}
An $\alpha$-valuation for a graph $G$ is also a graceful labelling for the digraph obtained by directing all edges of $G$ from the vertices of $V^{\sf small}$ to those of $V^{\sf large}$. 
\end{lemma}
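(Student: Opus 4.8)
The plan is to reduce this to the preceding graceful-labelling result of Bloom and Hsu by checking that, for an $\alpha$-valuation, orienting every edge from $V^{\sf small}$ to $V^{\sf large}$ is exactly the same orientation as directing each edge from its smaller-labelled endpoint to its larger-labelled endpoint. First I would recall that an $\alpha$-valuation is in particular a $\beta$-valuation, and that it comes equipped with a threshold $x$ such that every vertex of $V^{\sf small}$ has label at most $x$, while every vertex of $V^{\sf large}$ has label strictly greater than $x$. Since a graph admitting an $\alpha$-valuation is bipartite with parts $V^{\sf small}$ and $V^{\sf large}$, each edge joins a vertex $u \in V^{\sf small}$ to a vertex $v \in V^{\sf large}$, and the threshold property forces $u \leq x < v$, hence $u < v$ as integers. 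Consequently the orientation sending every arc from $V^{\sf small}$ to $V^{\sf large}$ coincides with the ``smaller label to larger label'' orientation, and the previous lemma applies verbatim to yield that the underlying vertex map is a graceful labelling of the resulting digraph.

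For a self-contained version I would instead argue directly. The vertex map is the same injection into $\{0,1,\dots,n\}$ demanded by the definition of a graceful digraph labelling, so only the arc-label condition needs verification. For an arc with tail $u \in V^{\sf small}$ and head $v \in V^{\sf large}$, its label is $(v-u) \bmod (n+1)$. Because $0 \leq u \leq x < v \leq n$, the integer $v-u$ satisfies $1 \leq v-u \leq n$, so the reduction modulo $n+1$ is trivial and the arc label equals $v-u = |v-u|$, which is precisely the label that the underlying edge receives in the $\beta$-valuation. Since the $\beta$-valuation realizes each value of $\{1,2,\dots,n\}$ exactly once as an edge label, the arcs realize each element of $\zed_{n+1} \setminus \{0\}$ exactly once, which is exactly the graceful condition for the digraph.

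The only step needing any care is the observation that membership in $V^{\sf small}$ versus $V^{\sf large}$ is governed entirely by comparison against the threshold $x$, so that the small-to-large orientation and the increasing-label orientation agree on every edge, and that the range bound $1 \leq v-u \leq n$ makes the modular reduction vacuous. Everything else is bookkeeping, and I do not expect a substantive obstacle; the lemma is essentially the specialization of the Bloom--Hsu result to the bipartite structure that an $\alpha$-valuation automatically provides.
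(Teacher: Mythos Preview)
Your proof is correct. The paper states this lemma without proof, treating it as an immediate observation following the preceding Bloom--Hsu lemma and the definition of an $\alpha$-valuation; your reduction (noting that the threshold $x$ forces the small-to-large orientation to coincide with the increasing-label orientation, so the previous lemma applies) is exactly the implicit argument the paper is relying on.
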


\medskip

Now, consider the digraph obtained by orienting the edges of $C_m$ clockwise around the cycle; we refer to this as a {\em directed cycle}.  
The following result is an immediate consequence of the relevant definitions.
\begin{lemma}
A graceful directed cycle with $m$ arcs is equivalent  to an $(m+1,m,1;1)$-CEDF in $\mathbb{Z}_{m+1}$.
\end{lemma}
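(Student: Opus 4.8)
The plan is to establish the claimed equivalence by unwinding both definitions and exhibiting a direct translation between a graceful labelling of the directed cycle $C_m$ and the $m$ singleton sets $A_0, \dots, A_{m-1}$ of an $(m+1, m, 1; 1)$-CEDF. First I would set up notation: a directed cycle $C_m$ has $m$ vertices $v_0, v_1, \dots, v_{m-1}$ and $m$ arcs, where arc $a_j$ goes from $v_j$ to $v_{j+1 \bmod m}$ (the clockwise orientation). A graceful labelling is a map $f$ sending the vertices into $\{0, 1, \dots, m\}$ such that the arc labels, computed as head minus tail modulo $m+1$, namely $f(v_{j+1 \bmod m}) - f(v_j) \bmod (m+1)$, hit each nonzero element of $\mathbb{Z}_{m+1}$ exactly once. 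Since there are $m$ arcs and $m$ nonzero elements in $\mathbb{Z}_{m+1}$, this is a bijection between arcs and $\mathbb{Z}_{m+1} \setminus \{0\}$.

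For the forward direction, given such a labelling $f$, I would define the singleton sets $A_j = \{f(v_j)\}$ for $0 \leq j \leq m-1$. Because $f$ is one-to-one on vertices, these $m$ sets are pairwise disjoint $1$-subsets of $\mathbb{Z}_{m+1}$, which matches the parameters $(n, m, \ell) = (m+1, m, 1)$. The key observation is that the multiset $\mathcal{D}(A_{j+1 \bmod m}, A_j)$ from Definition~\ref{CEDF.def} consists of the single difference $f(v_{j+1 \bmod m}) - f(v_j) \bmod (m+1)$, which is exactly the label of arc $a_j$. Taking the union over all $j$ therefore produces exactly the multiset of arc labels, and the graceful condition says this is precisely $\mathbb{Z}_{m+1} \setminus \{0\}$, each element occurring once, i.e.\ $\lambda = 1$. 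This is exactly the defining multiset equation of an $(m+1, m, 1; 1)$-CEDF.

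The reverse direction is symmetric: given an $(m+1, m, 1; 1)$-CEDF $\{A_0, \dots, A_{m-1}\}$ with $A_j = \{x_j\}$, I would label vertex $v_j$ of the directed cycle by $f(v_j) = x_j$. Disjointness of the singleton sets guarantees $f$ is injective into $\{0, 1, \dots, m\}$, and the CEDF multiset equation, read arc by arc, asserts exactly that the arc labels $x_{j+1 \bmod m} - x_j \bmod (m+1)$ run through every nonzero element of $\mathbb{Z}_{m+1}$ once, which is the graceful condition. Thus the two structures correspond bijectively.

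I do not anticipate a genuine obstacle here, as the statement is essentially a definitional unwinding once the notational correspondence (vertex $v_j \leftrightarrow$ set $A_j$, arc $a_j \leftrightarrow$ difference contributing to $\mathcal{D}(A_{j+1 \bmod m}, A_j)$) is fixed. The one point requiring mild care is verifying that the cyclic structure lines up correctly: the $m$ arcs of the directed cycle must match the $m$ consecutive difference multisets $\mathcal{D}(A_{j+1 \bmod m}, A_j)$, including the wrap-around arc from $v_{m-1}$ back to $v_0$, which corresponds to the term $j = m-1$ in the CEDF union. Confirming that the indexing modulo $m$ in the CEDF definition agrees with the clockwise arc from $v_{m-1}$ to $v_0$ is the only place where a sign or index error could creep in, so that is where I would be most careful.
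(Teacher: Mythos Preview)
Your proposal is correct and matches the paper's approach exactly: the paper simply states that this lemma ``is an immediate consequence of the relevant definitions'' and gives no further proof, so your careful definitional unwinding is precisely the argument the authors leave implicit.
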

Bloom and Hsu show that a directed cycle  with $m$ arcs is never graceful if $m$ is odd.  This can be expressed as the following theorem:
\begin{theorem}
If there exists an $(m+1,m,1;1)$-CEDF in $\mathbb{Z}_{m+1}$, then $m$ is even.
\end{theorem}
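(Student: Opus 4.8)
The plan is to exploit the fact that each set in the CEDF is a singleton, which collapses the multiset condition into a simple statement about consecutive differences around the cycle. Writing $A_j = \{a_j\}$ for distinct elements $a_0, \dots, a_{m-1} \in \mathbb{Z}_{m+1}$, the relevant difference multiset is just $\mathcal{D}(A_{j+1 \bmod m}, A_j) = \{a_{j+1 \bmod m} - a_j\}$, a single element. Hence the defining equation of Definition \ref{CEDF.def} (with $\lambda = 1$) becomes the assertion that the $m$ consecutive differences $d_j := a_{j+1 \bmod m} - a_j$, taken over $j = 0, \dots, m-1$, are precisely the $m$ nonzero elements of $\mathbb{Z}_{m+1}$, each occurring exactly once.

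First I would compute $\sum_{j=0}^{m-1} d_j$ in two different ways. Because the index $j+1$ is read modulo $m$, the sequence $(a_j)$ closes up into a cycle, so this sum telescopes to $0$ in $\mathbb{Z}_{m+1}$. On the other hand, since the $d_j$ enumerate $\mathbb{Z}_{m+1} \setminus \{0\} = \{1, 2, \dots, m\}$, their sum equals $1 + 2 + \cdots + m = m(m+1)/2$. Equating the two evaluations forces the congruence $m(m+1)/2 \equiv 0 \pmod{m+1}$, and this single constraint is all that needs to be analyzed.

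The concluding step is a parity analysis of $m(m+1)/2$ modulo $m+1$. When $m$ is even, $m+1$ is odd and $m(m+1)/2 = (m/2)(m+1)$ is an honest multiple of $m+1$, so the congruence holds and there is no obstruction. When $m$ is odd, $m+1$ is even, so $\frac{m+1}{2}$ is an integer and, using $m \equiv -1 \pmod{m+1}$, we get $m(m+1)/2 = m \cdot \frac{m+1}{2} \equiv -\frac{m+1}{2} \equiv \frac{m+1}{2} \pmod{m+1}$. Since $0 < \frac{m+1}{2} < m+1$, this residue is nonzero, contradicting the telescoping value of $0$. Therefore $m$ cannot be odd, which is exactly the claim.

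This argument has no genuinely hard step; it is the standard ``sum of all edge labels'' device for graceful-type labellings. The only point requiring care is the modular reduction in the odd case, where one must verify that $\frac{m+1}{2}$ is a well-defined integer and that it represents a nonzero residue in $\mathbb{Z}_{m+1}$. Everything else follows immediately from the singleton structure and the cyclic telescoping identity.
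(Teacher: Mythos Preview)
Your argument is correct and is essentially identical to the paper's: both compute the telescoping sum of the cyclic differences in two ways and reduce to the divisibility condition $(m+1)\mid m(m+1)/2$, which fails for odd $m$. You simply spell out the final parity step in a bit more detail than the paper does.
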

\begin{proof}
Let a $(m+1,m,1,1)$-CEDF be given by $\{\{a_1\},\dotsc,\{a_m\}\}$.  
Denote \[
S = (a_1-a_m)+\sum_{i=2}^m (a_i-a_{i-1}).\]

As the elements of $\mathbb{Z}_{m+1}^*$ each occur once as a ``directed'' difference, we have 
\begin{align*}
S&\equiv \sum_{i=1}^{m} i \pmod{m+1}\\
&\equiv \frac{m(m+1)}{2} \pmod{m+1}.\end{align*}
{However, by construction,} 
$S \equiv 0 \pmod{m+1}.$
Thus \[ (m+1)\mid \frac{m(m+1)}{2},\] which does not hold if $m$ is odd.
\end{proof}
In \cite{BloomHsu85}, Bloom and Hsu mention that the directed cycle with $m$ arcs is graceful for all even $m$, and state that constructions from such labellings arise from known constructions for {\em complete mappings}.
\begin{definition}
A bijection $\theta$ of an additive group $G$ to itself is a {\em complete mapping} if the map $\sigma\colon G\rightarrow G$ given by $\sigma(x)=x+\theta(x)$ is also a bijection. In this case, $\sigma$ is known as an {\em orthomorphism} of $G$.
\end{definition}  
If $\mathbb{Z}_{m+1}$ has an orthomorphism $\sigma$, and if the permutation of the elements of $\mathbb{Z}_{m+1}$ induced by $\sigma$ consists a cycle of length $m$ together with a single fixed point, then labelling the vertices of the directed cycle with $n$ arcs by the elements \begin{align*}a,\sigma(a),\sigma^2(a),\dotsc,\sigma^{n-1}(a)\end{align*}
(where $a$ is not the fixed point) results in a graceful labelling.  To see this, we observe each arc has a label of the form $\sigma(x)-x$ for some $x$ where $\sigma(x)-x\neq 0$. We know that $\theta(x)=\sigma(x)-x$ is a bijection of $G$, so this cycle contains arc labels consisting of all nonzero elements of $G$.  Orthomorphisms of this form are directly related to the concept of {\em $R$-sequencing} of groups.
\begin{definition}[\cite{Evans}]
Let $G$ be an additive group of order $n$ with identity $0$. An {\em $R$-sequencing} of  $G$ is an ordering $a_0,a_1,\dotsc,a_{n-1}$ of the elements of $G$ such that 
\begin{itemize}
\item $a_0=0$,
\item the partial sums $a_0$, $a_0+a_1$, $a_0+a_1+a_2,\dotsc,a_0+a_1+\dotsb+a_{n-2}$ are distinct, and 
\item $a_0+a_1+\dotsb+a_{n-1}=0$.
\end{itemize}
\end{definition}
We observe that by definition, the list of partial sums arising from an $R$-sequencing of a group $G$ omits a single group element, $c$. It turns out that the map $\sigma$  defined by $\sigma(c)=c$, $\sigma(a_0)=a_0+a_1$, $\sigma(a_0+a_1)=a_0+a_1+a_2$, and so on up to $\sigma(a_0+a_1+\dotsb+a_{n-2})=e$, is an orthomorphism of $G$ (see \cite{Evans}).  We see therefore that an $R$-sequencing of $\mathbb{Z}_{m+1}$ gives rise to an $(m+1,m,1,1)$-CEDF.  These $R$-sequencings are known to exist when $m$ is even:
\begin{theorem}[\cite{Evans}]
For $m\equiv 0 \pmod{4}$, there is an $R$-sequencing of $\mathbb{Z}_{m+1}$ given by
\begin{multline*}
0,-1,2,-3,4,\dotsc,-\left(\frac{m}{2}-1\right),\frac{m}{2},\frac{m}{2}-1,-\left(\frac{m}{2}-2\right),\frac{m}{2}-3,-\left(\frac{m}{2}-4\right),\\ \dotsc,3,-2,1,-\frac{m}{2}.
\end{multline*}
For $m\equiv 2\pmod{4}$, there is an $R$-sequencing of $\mathbb{Z}_{m+1}$ given by
\begin{multline*}
0,-1,2,-3,4,\dotsc,-\left(\frac{m}{2}-2\right),\frac{m}{2}-1,-\left(\frac{m}{2}+1\right),\frac{m}{2}+2,-\left(m/n+3\right),\\ \dotsc,-\left(m-2\right),m-1,-m,\frac{m}{2}+1.
\end{multline*}
\end{theorem}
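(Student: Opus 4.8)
The plan is to verify directly that each displayed ordering satisfies the three defining properties of an $R$-sequencing of $\mathbb{Z}_{m+1}$. Write $n = m+1$ and let $a_0, a_1, \dots, a_{m}$ be the listed sequence (the evident typo $m/n+3$ in the $m \equiv 2 \pmod 4$ case should read $m/2+3$). The condition $a_0 = 0$ is immediate. I would first observe that the third condition, $\sum_{i=0}^{m} a_i = 0$, is essentially free: once we know the $a_i$ are a permutation of $\mathbb{Z}_{m+1}$, the total is $\sum_{x \in \mathbb{Z}_{m+1}} x = \tfrac{m(m+1)}{2}$, which is $\equiv 0 \pmod{m+1}$ because $m+1$ is odd. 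So the work reduces to two claims: (i) the $a_i$ form a permutation of $\mathbb{Z}_{m+1}$, and (ii) the partial sums $s_k = \sum_{i=0}^{k} a_i$ for $0 \leq k \leq m-1$ are distinct.

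For claim (i), I would split each sequence into its two visible blocks and sort the entries by sign and parity. In the $m \equiv 0 \pmod 4$ case the first block is $a_k = (-1)^k k$ for $0 \leq k \leq m/2$, contributing $0$, the positive even values up to $m/2$, and the negative odd values; the second block contributes the positive odd values up to $m/2-1$ together with the negative even values and the extra term $-m/2$. Reducing $-j$ to the residue $m+1-j$, one checks that together these hit each of $0, 1, \dots, m$ exactly once. The $m \equiv 2 \pmod 4$ case is handled the same way after noting that the second block has the form $(-1)^t(m/2+t)$ for $1 \leq t \leq m/2$ followed by the anomalous final entry $m/2+1$; here the large magnitudes $m/2+t$ reduce to small residues, and again every residue class is covered once.

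The main work is claim (ii), and here the alternating signs are exactly what make the computation tractable, since consecutive terms telescope. Using $\sum_{i=0}^{k}(-1)^i i = k/2$ for even $k$ and $-(k+1)/2$ for odd $k$, the first block produces partial sums running through a symmetric interval about $0$; in the second block the paired terms each shift the running sum by $\pm 1$, so the partial sums again trace out consecutive integers, apart from the jumps caused by the two anomalous terminal entries. Carrying out this bookkeeping, I expect to find that in the $m \equiv 0 \pmod 4$ case the values $s_0, \dots, s_{m-1}$ are precisely $\mathbb{Z}_{m+1} \setminus \{3m/4\}$, and in the $m \equiv 2 \pmod 4$ case precisely $\mathbb{Z}_{m+1} \setminus \{(3m+2)/4\}$; since these are $m$ distinct residues they cannot repeat, which is claim (ii). The single omitted residue is exactly the fixed point $c$ of the associated orthomorphism, consistent with the discussion preceding the theorem.

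The main obstacle is bookkeeping rather than any conceptual difficulty: one must carefully track the index ranges across the two blocks, correctly reduce the negative and the large-magnitude entries modulo $m+1$, and handle the two special terminal entries ($-m/2$, respectively $m/2+1$) that break the otherwise regular alternating pattern. Organizing the computation by the parity of the summation index within each block keeps the closed forms clean and is, I expect, the cleanest route to pinning down the single missing partial sum.
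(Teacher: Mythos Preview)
The paper does not prove this theorem at all: it is quoted verbatim as a result from \cite{Evans} and is immediately followed by the sentence ``The CEDFs that we describe in Corollaries~\ref{cor:asdf} and \ref{thm:CEDFm4} can be viewed as arising from these $R$-sequencings,'' with no intervening proof. So there is no ``paper's own proof'' to compare your proposal against.

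That said, your plan is the natural one for a result of this kind: direct verification that the listed ordering is a permutation of $\mathbb{Z}_{m+1}$ with distinct partial sums. Your observation that the vanishing of the total sum is automatic once you know the $a_i$ enumerate $\mathbb{Z}_{m+1}$ (using that $m+1$ is odd) is correct and saves work. The telescoping/alternating-sign bookkeeping you outline is exactly how one pins down the partial sums in closed form. Be aware that what you have written is a plan, not yet a proof: the phrases ``one checks that'' and ``I expect to find'' still hide all of the actual case analysis, and the claimed missing residues $3m/4$ and $(3m+2)/4$ need to be verified rather than asserted. If you want a complete argument you will have to write out the closed forms for $s_k$ in each of the four regimes (first block even/odd index, second block even/odd index) and confirm they are pairwise disjoint.
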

The CEDFs that we describe in Corollaries~\ref{cor:asdf} and \ref{thm:CEDFm4} can be viewed as arising from these $R$-sequencings.  (We choose to give explicit descriptions below in order to exploit a connection with $\alpha$-valuations.) Additional $R$-sequencings of $\mathbb{Z}_n$ are given in \cite{AAAP}; in turn these give rise to additional CEDFs with $\ell=1$.

In the case  $m\equiv 0\pmod{4}$, Rosa describes an $\alpha$-valuation for $C_m$ (see \cite{rosa}). We show here that this construction also gives rise to a graceful labelling of the directed cycle with $m$ arcs.
\begin{theorem}\label{thm:rosaalpha}  If $m\equiv0\pmod{4}$, then there is an $\alpha$-valuation of $C_m$.
\end{theorem}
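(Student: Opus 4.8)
The plan is to exhibit an explicit $\alpha$-valuation of $C_m$ for $m = 4k$ and to verify the three defining properties directly. Write the vertices in cyclic order as $v_0, v_1, \dots, v_{m-1}$. Since $m$ is even, $C_m$ is bipartite, and for an $\alpha$-valuation the two colour classes must coincide with $V^{\sf small}$ and $V^{\sf large}$; accordingly I would plan to place the small labels on the even-indexed vertices and the large labels on the odd-indexed ones.

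For the construction itself, I would label the even-indexed vertices with the small values $0,1,\dots,2k-1$ in increasing order (so $v_{2j}$ receives label $j$), and label the odd-indexed vertices by counting down from $4k$ but omitting the value $3k$; that is, the large labels appear in cyclic order as $4k, 4k-1, \dots, 3k+1, 3k-1, \dots, 2k$, which is precisely the set $\{2k, 2k+1, \dots, 4k\}\setminus\{3k\}$. I would take the threshold to be $x = 2k-1$. With these choices the labelling is visibly one-to-one onto $\{0,1,\dots,m\}\setminus\{3k\}$, and every small label is at most $2k-1$ while every large label is at least $2k$, so the defining $\alpha$-condition holds automatically once the labelling is shown to be graceful.

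To verify gracefulness, observe that every edge joins a small vertex to an adjacent large vertex, so each edge difference has the form (large) $-$ (small). I would split the edges into two families: those running from a small vertex to the next large vertex clockwise, and those running from a large vertex to the next small vertex (handling the single wrap-around edge $v_{m-1}v_0$ separately). Each family, indexed by position around the cycle, breaks into two arithmetic progressions according to whether the index lies in the first or second half of the cycle, reflecting the skip of the large label $3k$. A short computation shows that one family supplies exactly the odd values in a prescribed range together with a block of evens, while the other supplies the complementary block of evens and odds, so that the four resulting progressions together tile $\{1,2,\dots,4k\}$ with no repetitions.

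The step I expect to be the main obstacle is precisely this difference count: because the large labels skip $3k$, the naive zig-zag pattern of differences is interrupted, and one must check carefully that the four sub-progressions cover $\{1,\dots,m\}$ exactly and disjointly rather than merely filling the correct range. A useful device that pins down the correct omitted value in advance is the identity $\sum_{\text{edges}} (\text{difference}) = \sum_{i=1}^{m} i = m(m+1)/2$; since each vertex of $C_m$ has degree $2$ and each large vertex is the larger endpoint of both of its edges, this forces $2\bigl(\sum \text{large}\bigr) - 2\bigl(\sum \text{small}\bigr) = m(m+1)/2$, which determines $\sum \text{large} = 6k^2$ and thereby identifies $3k$ as the unique label that must be dropped. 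Finally, the hypothesis $m \equiv 0 \pmod 4$ enters exactly here: it guarantees that $k$, and hence $2k$, $3k$, and the two half-cycle splits, are integers, so that both the construction and the tiling argument go through.
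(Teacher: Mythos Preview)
Your construction is exactly Rosa's labelling quoted in the paper (up to the index shift $v_j \leftrightarrow$ vertex $j+1$): the small labels $0,1,\dots,2k-1$ go on alternate vertices in order, and the large labels run $4k,4k-1,\dots,3k+1,3k-1,\dots,2k$ on the others, omitting $3k$. The paper's own proof simply states the formula and defers the verification to Rosa, so your planned edge-by-edge check and the sum identity pinning down the omitted label $3k$ are more than the paper itself supplies; the approach is the same.
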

\begin{proof}
 Denote the vertices of $C_m$ by the numbers $1,2,\dotsc,m$ in order round the cycle.  We label the vertices with elements of $\{0,1,2,\dotsc,m\}$ by giving vertex $i$ the label $a_i$ defined as follows:
\begin{align*}
a_i=\begin{cases}
(i-1)/2&\text{$i$ odd},\\
m+1-i/2&\text{$i$ even, $i\leq m/2$},\\
m-i/2&\text{$i$ even, $i>m/2$.}
\end{cases}
\end{align*}
According to Rosa, this labelling ``is evidently an $\alpha$-valuation" \cite{rosa}. 
\end{proof}
\begin{corollary}\label{cor:asdf}
If $m\equiv 0\pmod{4}$, then there is an $(m+1,m,1,1)$-CEDF.
\end{corollary}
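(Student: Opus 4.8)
The plan is to produce the $(m+1,m,1;1)$-CEDF explicitly from the $\alpha$-valuation of Theorem~\ref{thm:rosaalpha} and verify its defining multiset equation by hand. Since $\ell=1$, each block is a singleton, so I would set $A_{i-1}=\{a_i\}$ for $i=1,\dotsc,m$, where the $a_i$ are the Rosa labels taken in the cyclic order $1,2,\dotsc,m$ around $C_m$. With this labelling the CEDF condition collapses to the requirement that the consecutive clockwise differences
\[ a_2-a_1,\ a_3-a_2,\ \dotsc,\ a_m-a_{m-1},\ a_1-a_m \]
run through each nonzero element of $\mathbb{Z}_{m+1}$ exactly once; equivalently, that the Rosa labelling is a graceful labelling of the directed cycle, so that the CEDF then follows from the lemma identifying graceful directed cycles with $(m+1,m,1;1)$-CEDFs.

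The point worth flagging is that this does \emph{not} follow formally from Theorem~\ref{thm:rosaalpha}. An $\alpha$-valuation guarantees only that the \emph{absolute} edge differences exhaust $\{1,\dotsc,m\}$, and the orientation in Lemma~\ref{ex:alphadigraph} directs each edge from $V^{\sf small}$ to $V^{\sf large}$; since the two colour classes of a cycle alternate, that orientation is not the consistent clockwise one but one whose arcs alternate in direction. Traversing the cycle clockwise, a ``forward'' edge contributes its positive difference $+d$, whereas a ``backward'' edge contributes $-d\equiv(m+1)-d\pmod{m+1}$, so the signed differences genuinely differ from the absolute ones. The substance of the corollary is exactly that, for this particular labelling, the signed differences still cover $\mathbb{Z}_{m+1}\setminus\{0\}$.

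To check this I would compute $\delta_i:=a_{i+1}-a_i\pmod{m+1}$ (indices read cyclically) directly from the piecewise formula for $a_i$, splitting into cases by the parity of $i$ and by whether $i\leq m/2$ or $i>m/2$. Writing $m=4t$, so that $m/2=2t$ is even, a short calculation yields residues $-i$, $-i-1$, $i$, and $i+1$ in the four interior cases, together with the single wrap-around term $a_1-a_m\equiv\frac{m}{2}+1$. Collecting them, the two cases giving $-i$ and $i$ produce exactly the even residues $\{2,4,\dotsc,m\}$, while the two cases giving $-i-1$ and $i+1$ together with the wrap-around term produce exactly the odd residues $\{1,3,\dotsc,m-1\}$; hence every value $1,2,\dotsc,m$ occurs once.

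The main obstacle is simply the bookkeeping in this case analysis: matching each range of indices to its residue class and confirming that the four families partition $\{1,\dotsc,m\}$ with neither collision nor omission. The congruence $m\equiv0\pmod 4$ is precisely what makes $m/2$ even, so that the four families of residues interlock cleanly; for $m\equiv2\pmod 4$ this alignment fails, consistent with the separate construction needed in that case.
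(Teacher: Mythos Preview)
Your proposal is correct and parallels the paper's argument: both start from Rosa's $\alpha$-valuation, acknowledge that the induced orientation alternates rather than cycling, and resolve the issue by a case split on the formula for $a_i$. The paper organizes the verification a bit differently. Instead of computing all $m$ clockwise differences $\delta_i$, it computes only the labels on the $m/2$ arcs that must be \emph{reversed} to turn the $\alpha$-valuation orientation into the clockwise one (namely $a_i-a_{i+1}$ for even $i$, together with $a_m-a_1$), and then observes that this set of values is closed under negation modulo $m+1$. Since reversing an arc negates its label, the multiset of arc labels is unchanged by the reorientation, so gracefulness carries over automatically. Your direct enumeration of all $\delta_i$ arrives at the same conclusion with the same raw data; the paper's negation-invariance shortcut is marginally cleaner and, more importantly, is the form that gets reused verbatim in the blow-up argument of Theorem~\ref{thm:CEDFm4}.
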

\begin{proof}
As noted in Lemma~\ref{ex:alphadigraph}, we can think of the $\alpha$-valuation given in the proof of Theorem~\ref{thm:rosaalpha} as being a graceful labelling of the digraph in which, for each even $i\in\{1,2,\dotsc,m\}$, the arcs incident with vertices $i$ are directed towards $i$, since these vertices are precisely the elements of $V^{\sf large}$.  In order to convert this digraph into a directed cycle, we reverse the direction on those arcs of the form $(a_{i+1}, a_i)$ with $i$ even.  This results in negating (modulo $m+1$) the labels on these arcs.  We now consider the values on these labels:
\begin{itemize}
\item for $i$ even with $2\leq i\leq m/2$, the differences \[a_i-a_{i+1}=m+1-i/2-i/2=m+1-i\] take on all odd values from $m-1$ down to $m/2+1$;
\item for $i$ even with $m/2+2\leq i\leq m-2$, the differences \[a_i-a_{i+1}=m-i/2-i/2=m-i\] take on all even values from $m/2-2$ down to $2$;
\item finally, we have $a_m-a_1=m/2$.
\end{itemize}
This set of values is fixed under negation modulo $m+1$.  Thus, the labelled digraph obtained by reversing this set of arcs has the same set of arc labels as the $\alpha$-valuation digraph, and hence the corresponding labelling is graceful. 
\end{proof}
\begin{example}
An example of a $(13,12,1;1)$-CEDF in $\mathbb{Z}_{13}$ is given by \begin{align*}\{0\},\{12\},\{1\},\{11\},\{2\},\{10\},\{3\},\{8\},\{4\},\{7\},\{5\},\{6\}.
\end{align*}
\end{example}

We have constructed an $(m+1,m,1;1)$-CEDF for any $m\equiv 0 \pmod{4}$.  This approach can now be combined with the blowing-up technique to construct an $(m\ell^2+1,m,\ell;1)$-CEDF for any $\ell\geq 1$ and any $m\equiv 0 \pmod{4}$.
\begin{theorem}\label{thm:CEDFm4}
If $m\equiv0\pmod{4}$, then there is an $(m\ell^2+1,m,1;1)$-CEDF for any $\ell\geq 1$.
\end{theorem}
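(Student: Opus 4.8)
The plan is to combine the $\alpha$-valuation of $C_m$ from Theorem~\ref{thm:rosaalpha} with the lexicographic blow-up of Theorem~\ref{lex.thm}, and then mimic the arc-reversal argument of Corollary~\ref{cor:asdf}. First I would apply Theorem~\ref{lex.thm} to the $\alpha$-valuation of $C_m$ to obtain an $\alpha$-valuation of $C_m \boldsymbol{\cdot} K_\ell^c$. This graph has exactly $m\ell^2$ edges, so its vertex labels lie in $\{0,1,\dots,m\ell^2\}$ and its edge labels are precisely $\{1,\dots,m\ell^2\}$. The $m$ blown-up super-vertices are independent $\ell$-sets $A_0,\dots,A_{m-1}$ arranged so that $A_j$ is joined to $A_{j+1\bmod m}$ by a complete bipartite graph $K_{\ell,\ell}$; these will be the sets of the CEDF, and setting $n=m\ell^2+1$ places all labels in $\mathbb{Z}_n$.

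By Lemma~\ref{ex:alphadigraph}, orienting every edge from $V^{\sf small}$ to $V^{\sf large}$ yields a graceful labelling of the associated digraph. As in Rosa's labelling, the super-vertices of even index lie in $V^{\sf large}$ and those of odd index in $V^{\sf small}$, so this orientation alternates around the cycle rather than forming a consistent directed cycle. Exactly as in Corollary~\ref{cor:asdf}, I would reverse the arcs lying between a large super-vertex $A_i$ and the following small super-vertex $A_{i+1}$ (those with $i$ even) to obtain a genuine directed cycle $A_0\to A_1\to\dots\to A_{m-1}\to A_0$. Reversing an arc negates its label modulo $n$, so the resulting digraph is again graceful --- and hence a CEDF, since the multiset of arc labels equals $\bigcup_j \mathcal{D}(A_{j+1\bmod m},A_j)$ --- precisely when the set $L$ of labels on the reversed arcs is fixed under the negation map $d\mapsto n-d$.

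The crux is therefore to verify this negation symmetry of $L$ after the blow-up, and the observation that makes it routine is that the blow-up sends each single edge of $C_m$ to a block of $\ell^2$ consecutive edge labels. Concretely, if an edge of $C_m$ joins super-vertices with original label difference $c=a_i-a_{i+1}$, then the $\ell^2$ arcs of the corresponding $K_{\ell,\ell}$ carry exactly the labels $\{\ell^2(c-1)+1,\dots,\ell^2 c\}$; I would confirm this by tracking the two blow-up steps of Theorems~\ref{thm:blowup1} and~\ref{thm:blowup2}, which together scale the original labels by $\ell^2$ and spread each super-edge over a full residue block. Under negation modulo $n=m\ell^2+1$ this $c$-block maps onto the $(m+1-c)$-block, so $L$ is negation-invariant if and only if the set of difference values $c=a_i-a_{i+1}$ over the reversed arcs is invariant under $c\mapsto m+1-c$. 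But that is exactly the symmetry already established in the proof of Corollary~\ref{cor:asdf}, where this set of differences was shown to be fixed under negation modulo $m+1$. Thus the main obstacle, checking that reversal preserves gracefulness, reduces cleanly to the $\ell=1$ case, with the blow-up contributing only the bookkeeping that each difference expands to a consecutive block of length $\ell^2$. The resulting directed cycle is graceful, giving the desired $(m\ell^2+1,m,\ell;1)$-CEDF.
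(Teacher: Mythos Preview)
Your proposal is correct and follows essentially the same approach as the paper: apply Theorem~\ref{lex.thm} to Rosa's $\alpha$-valuation of $C_m$, note that each original edge-difference $c$ blows up to the block $\{\ell^2(c-1)+1,\dots,\ell^2 c\}$, and observe that negation modulo $m\ell^2+1$ permutes these blocks according to $c\mapsto m+1-c$, so the negation-invariance established in Corollary~\ref{cor:asdf} lifts verbatim. Your write-up is in fact more explicit than the paper's on the block-to-block negation step; the only thing to tidy is the parity convention (in Rosa's $1$-indexed labelling the \emph{even}-indexed vertices are large, so after your shift to $A_0,\dots,A_{m-1}$ it is the odd-indexed super-vertices that lie in $V^{\sf large}$), but this does not affect the argument.
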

\begin{proof}
Suppose we have the $\alpha$-valuation of $C_m$ described in Theorem~\ref{thm:rosaalpha}.  
Applying Theorem \ref{lex.thm}, 
we obtain an $\alpha$-valuation of the lexicographic product $C_m \boldsymbol{\cdot} K_{\ell}^c$.
Suppose we consider the labels to be elements of $\mathbb{Z}_{m\ell^2+1}$. We show that this is a graceful labelling for the digraph obtained by orienting all edges clockwise around the cycle.

Where an edge between two vertices had a difference of $i$ in the initial $\alpha$-valuation of $C_m$, the corresponding independent sets, following the application of each blow-up, will be joined by $k^2$ edges that have the values $k^2i,k^2i-1,\dotsc,k^2(i-1)+1.$  As in the proof of Corollary~\ref{cor:asdf}, the set of arcs whose direction must be reversed to convert the $\alpha$-valuation digraph into the cyclically-ordered digraph is closed under negation, and so this labelling is graceful for both digraphs.
\end{proof}
\begin{example}
We can use  Corollary~\ref{cor:asdf} to construct a $(5,4,1;1)$-CEDF in $\mathbb{Z}_5$.  The resulting graph labelling is that depicted in Example~\ref{ex:C4}.  When we blow this up with $\ell=2$, as in the construction of Theorem~\ref{thm:CEDFm4}, we obtain the following labelled graph:
\begin{center}
\begin{tikzpicture}[main/.style = {draw, circle}] 
\node[main] (1) {$0$};
\node[main] (2) [right of=1]{$14$};
\node[main] (3) [below of=2]{$4$};
\node[main] (4) [below of=1]{$8$};
\node[main] (5) [above left of=1]{$1$};
\node[main] (6) [above right of=2]{$16$};
\node[main] (7) [below right of=3]{$5$};
\node[main] (8) [below left of=4]{$6$};
 \draw(5)--(2);
 \draw(5)--(6);
 \draw(1)--(2);
 \draw(1)--(6);
 \draw(2)--(3);
 \draw(2)--(7);
 \draw(6)--(3);
 \draw(6)--(7);
 \draw(3)--(4);
 \draw(3)--(8);
 \draw(7)--(4);
 \draw(7)--(8);
 \draw(4)--(5);
 \draw(4)--(1);
 \draw(8)--(1);
 \draw(8)--(5);
\end{tikzpicture}
\end{center}
Orienting the edges in a clockwise direction gives rise to the $(17,4,2;1)$-CEDF in $\mathbb{Z}_{17}$ whose sets are $\{0,1\}, \{14,16\},\{4,5\},\{8,6\}$.
\end{example}
For $m\equiv 2\pmod{4}$, there is no $\alpha$-valuation for $C_m$ (see \cite{rosa}).  However, we can adapt the construction of Theorem~\ref{thm:CEDFm4} to provide a construction of an $(m\ell^2+1,m,\ell;1)$-CEDF in this case also.  
\begin{theorem}\label{thm:argh}
If $m\equiv 2\pmod{4}$, then there is an $(m\ell^2+1,m,1;1)$-CEDF for any $\ell\geq 1$.
\end{theorem}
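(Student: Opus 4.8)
The plan is to reproduce the strategy of Theorem~\ref{thm:CEDFm4}, but since $C_m$ has no $\alpha$-valuation (indeed no $\beta$-valuation) when $m \equiv 2 \pmod 4$, I cannot start from an $\alpha$-valuation and invoke Theorem~\ref{lex.thm}. Instead I would replace the $\alpha$-valuation by an explicit \emph{alternating} (zigzag) graceful labelling of the \emph{directed} cycle: one in which the labels go up, down, up, down, \dots\ around the cycle, with the local minima at the even positions and the local maxima at the odd positions. This is weaker than a genuine $\alpha$-valuation (local alternation instead of a global split $V^{\sf small}<V^{\sf large}$), it is exactly what the blow-up needs, and unlike an $\alpha$-valuation it can be achieved for $m \equiv 2 \pmod 4$. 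For $\ell=1$ this labelling already \emph{is} the desired CEDF, and the blow-up handles $\ell \geq 2$ uniformly.

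Concretely, writing $m=4t+2$, I would label the vertices $v_0,\dots,v_{m-1}$ of $C_m$ by placing the valleys $v_{2i}=i$ for $0\le i\le 2t$, and the peaks
\[
v_{2i+1} = \begin{cases} m-i, & 0 \le i \le t-1,\\ m-1-i, & t \le i \le 2t,\end{cases}
\]
so that the single omitted label is $m-t$. The first step is to check that this is a graceful labelling of the directed cycle, i.e.\ that the directed differences $v_{j+1}-v_j$ realise every nonzero residue of $\mathbb{Z}_{m+1}$ exactly once. I would verify this by the case split already visible in the formula: the ascending arcs (even positions) contribute the up-steps $v_{2i+1}-v_{2i}$, while the descending arcs (odd positions) contribute the residues $m+1-(v_{2i+1}-v_{2i+2})$. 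Separating $0\le i\le t-1$ from $t\le i\le 2t$ shows that the up-steps supply the large even residues together with the small odd residues, while the down-steps supply exactly the complementary residues, so that the two families tile $\{1,\dots,m\}$.

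The second step is the blow-up. Scaling all labels by $\ell^2$ and, as in Theorem~\ref{thm:CEDFm4}, replacing each valley $\ell^2 v_{2i}$ by the ``units'' set $\ell^2 v_{2i}+\{0,1,\dots,\ell-1\}$ and each peak $\ell^2 v_{2i+1}$ by the ``tens'' set $\ell^2 v_{2i+1}+\{0,-\ell,\dots,-(\ell-1)\ell\}$ produces the sets $A_0,\dots,A_{m-1}$ of the prospective CEDF in $\mathbb{Z}_{m\ell^2+1}$. For an ascending arc the $\ell^2$ directed differences form the aligned block $\{\ell^2(d-1)+1,\dots,\ell^2 d\}$, where $d$ is the up-step; for a descending arc they form the negative of such a block, and since $-\{\ell^2(d-1)+1,\dots,\ell^2 d\}\equiv\{\ell^2(m-d)+1,\dots,\ell^2(m-d+1)\}$ modulo $m\ell^2+1$, each arc again contributes a length-$\ell^2$ interval indexed by its directed residue. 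Because those residues are precisely $\{1,\dots,m\}$ by Step~1, the $m$ blocks tile $\{1,\dots,m\ell^2\}=\mathbb{Z}_{m\ell^2+1}\setminus\{0\}$, giving the CEDF. (The negation of the descending blocks plays here the role of the arc-reversal and negation-closure argument used in Corollary~\ref{cor:asdf}.)

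I expect the main obstacle to be Step~1: exhibiting an explicit alternating labelling that is graceful for \emph{all} $m\equiv 2\pmod 4$, and proving gracefulness cleanly. This is the direct analogue of Rosa's $\alpha$-valuation formula for $m\equiv0\pmod4$ used in Theorem~\ref{thm:rosaalpha}, but precisely because no $\alpha$-valuation exists the formula must break its pattern near $i=t$ (hence the two cases above and the omitted label $m-t$); arranging this break so that the directed residues partition $\{1,\dots,m\}$ is the delicate point. Once the labelling is in hand, the blow-up and tiling of Step~2 — together with the routine verification that the $A_j$ are disjoint $\ell$-subsets — are straightforward adaptations of Theorem~\ref{thm:CEDFm4}.
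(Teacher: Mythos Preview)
Your proposal is correct and follows essentially the same approach as the paper: your explicit zigzag labelling is (after reindexing from $0,\dots,m-1$ to $1,\dots,m$) precisely the labelling the paper writes down, and your blow-up coincides with the paper's application of Theorems~\ref{thm:blowup1}--\ref{thm:blowup2} to that labelling. One minor note: you slightly undersell your own construction when you call it ``local alternation instead of a global split'' --- in fact all valleys satisfy $v_{2i}\le 2t$ and all peaks satisfy $v_{2i+1}\ge 2t+1$, so there \emph{is} a global $V^{\sf small}/V^{\sf large}$ split (which is exactly why the blow-up goes through); what fails is only that the undirected edge label $m/2$ occurs twice, which is the paper's observation too.
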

\begin{proof}
As before, we start by giving the vertices of $C_m$ labels $a_i$ for $i\in\{1,2,\dotsc,m\}$, this time defined as
\begin{align*}
a_i=\begin{cases}
(i-1)/2&\text{$i$ odd},\\
m+1-i/2&\text{$i$ even, $i\leq m/2-1$},\\
m-i/2&\text{$i$ even, $i\geq m/2+1$.}
\end{cases}
\end{align*}
We observe that this is not an $\alpha$-valuation, because the edge label $m/2$ occurs both as $|a_1-a_m|$ and as $|a_{m/2+1}-a_{m/2}|$.  However, every difference apart from $\pm m/2$ is obtained exactly once, and by a similar argument to that used in the proof of Corollary~\ref{cor:asdf} we can see that this gives rise to an $(m+1,m,1;1)$-CEDF.  Furthermore, we note that this labelling still has the property that each edge connects a vertex from a set $V^{\sf small}$, whose labels are contained in $\{0,1,\dotsc,m/2-1\}$, to a vertex from the set $V^{\sf large}$, whose labels are contained in $\{m/2,\dotsc,m\}$.  This makes it possible to apply the same blowing-up constructions described in Theorem~\ref{thm:CEDFm4}, and as a result we obtain a labelling that gives an $(m\ell^2+1,m,\ell;1)$-CEDF. 
\end{proof}
\begin{example}
An example of an $(11,10,1;1)$-CEDF in $\mathbb{Z}_{12}$ obtained by this method is given by \begin{align*}\{0\},\{10\},\{1\},\{9\},\{2\},\{7\},\{3\},\{6\},\{4\},\{5\}.
\end{align*}
\end{example}
We observe that the techniques used to prove that a directed cycle with $m$ arcs is not graceful for odd $m$ can be extended to show that an $(m\ell^2+1,m,\ell;1)$-CEDF does not exist if $\ell$ and $m$ are both odd.
\begin{theorem}
\label{nonexist.thm}
If $\ell$ and $m$ are odd then there is no $(m\ell^2+1,m,\ell;1)$-CEDF.
\end{theorem}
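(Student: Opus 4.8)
The plan is to extend the telescoping-sum argument already used for the directed-cycle ($\ell = 1$) case. First I would record the elementary observation that the hypothesis ``$m$ and $\ell$ both odd'' is equivalent to $n = m\ell^2 + 1$ being \emph{even}: the product $m\ell^2$ is odd precisely when both $m$ and $\ell$ are odd. Working in $\mathbb{Z}_n$, I would set $\sigma_j = \sum_{x \in A_j} x$ for each $j$ and introduce the single quantity
\[
T = \sum_{j=0}^{m-1} \sum_{y \in A_{j+1 \bmod m}} \sum_{x \in A_j} (y - x),
\]
which I then evaluate in two independent ways. This mirrors the quantity $S$ in the proof that a directed cycle on odd $m$ arcs is never graceful, but now the inner sums range over $\ell$-subsets rather than singletons.

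The first evaluation uses the defining multiset equation. Since $\bigcup_{j} \mathcal{D}(A_{j+1 \bmod m}, A_j) = \mathbb{Z}_n \setminus \{0\}$ with each nonzero element occurring exactly once (as $\lambda = 1$), the total $T$ is simply the sum of all nonzero elements of $\mathbb{Z}_n$, so $T \equiv \frac{n(n-1)}{2} \pmod{n}$. The second evaluation is by telescoping: because $|A_j| = |A_{j+1}| = \ell$, the inner double sum equals $\ell\,\sigma_{j+1} - \ell\,\sigma_j$, and summing cyclically over $j$ gives $T = \ell \sum_{j=0}^{m-1}(\sigma_{j+1 \bmod m} - \sigma_j) = 0$, since $j \mapsto j+1 \bmod m$ merely permutes the index set. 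Comparing the two evaluations forces $\frac{n(n-1)}{2} \equiv 0 \pmod{n}$. When $n$ is even this fails, because $n-1$ is odd and hence $\frac{n(n-1)}{2} \equiv \frac{n}{2} \not\equiv 0 \pmod{n}$. As both $m$ and $\ell$ odd forces $n$ even, we obtain the required contradiction, and no such CEDF exists.

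The only genuinely delicate point is the bookkeeping in the telescoping step: I would check carefully that the coefficient emerging from the cancellation is exactly $\ell$ (so that the cyclic sum collapses to $0$ regardless of the unknown set-sums $\sigma_j$), and that the multiset interpretation of $T$ truly counts each nonzero element once. I would also flag one structural subtlety. The argument uses the cyclic group only through the single fact that the sum of its elements is nonzero when $n$ is even; the identical proof applies to any abelian group of order $n$ with a unique element of order two. That hypothesis is, however, essential rather than cosmetic — for instance the sets $\{(0,0)\}, \{(0,1)\}, \{(1,0)\}$ form a $(4,3,1;1)$-CEDF in $\mathbb{Z}_2 \times \mathbb{Z}_2$, where the sum of all elements vanishes — so the nonexistence statement is to be read over $\mathbb{Z}_n$, consistent with the framing of the preceding directed-cycle result.
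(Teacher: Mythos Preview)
Your argument is correct and shares the same core idea as the paper's: sum all the directed differences two ways, obtaining zero by telescoping and $\frac{(n-1)n}{2}$ by the CEDF multiset equation, which is nonzero modulo $n$ precisely when $n=m\ell^2+1$ is even. The paper packages the telescoping step graph-theoretically, observing that the blown-up directed cycle is Eulerian (each vertex has in-degree and out-degree $\ell$) and summing head minus tail around a closed Eulerian trail; your direct computation via the set-sums $\sigma_j$ achieves the same cancellation more transparently and without the graph-theoretic detour. Your closing observation that the argument really uses only the presence of a unique involution in $G$, together with the explicit $(4,3,1;1)$-CEDF in $\mathbb{Z}_2\times\mathbb{Z}_2$, goes beyond what the paper states and usefully clarifies the scope of the nonexistence result.
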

\begin{proof}
Consider the digraph obtained by blowing up each vertex of the cycle $C_m$ by a factor of $\ell$, then orienting all the edges clockwise around the cycle.  Suppose we have a graceful labelling of this digraph.  As each vertex has in-degree and out-degree both equal to $\ell$, this graph has a directed Eulerian trail.  If we sum (modulo $m\ell^2+1$), for each arc in turn around the trail, the label on the head of the arc minus the label on its tail, we get a result of $0$ since it is a closed trail.  However, if the labelling is graceful, then the edge labels are precisely the elements $\{1,2,\dotsc,m\ell^2\}$, and so this sum is equivalent to $m\ell^2(m\ell^2+1)/2
\pmod{m\ell^2+1}$, which is not equivalent to $0$ if $m$ and $\ell$ are both odd.  This gives a contradiction.
\end{proof}

It remains to consider the cases where $\ell$ is even and $m$ is odd. Several examples of CEDFs with $\ell$ even and $m$ odd can be obtained from Theorem \ref{thm2}, as shown in \cite{VS}. We also have the following new result for the case $\ell = 2$, which extends Theorem \ref{thm1}.

\begin{theorem}
\label{ell=2.thm}
Suppose that $q= 4m+1$ is a prime and $q > 5$. Then there is a $(q,m,2;1)$-CEDF in $\eff_q$.
\end{theorem}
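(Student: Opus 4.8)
The plan is to reduce the statement to a problem about primitive roots and then settle that problem by a character-sum estimate together with a finite computation. Since Theorem~\ref{thm1} already produces a $(q,m,2;1)$-CEDF from any primitive element $\alpha \in \eff_q$ for which $\alpha^4-1$ is a quadratic non-residue, it suffices to prove that \emph{every} prime $q = 4m+1 > 5$ admits such a primitive root. Concretely, the associated CEDF consists of the pairs $A_j = \{\alpha^{2j}, -\alpha^{2j}\}$, $0 \le j \le m-1$; one checks that each circular difference set $\mathcal{D}(A_{j+1\bmod m},A_j)$ equals $\alpha^{2j}\{\pm(\alpha^2-1),\pm(\alpha^2+1)\}$, so that the union over $j$ is all of $\eff_q^*$ exactly when $(\alpha^2-1)(\alpha^2+1) = \alpha^4-1$ is a non-residue. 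I first note that the restriction $q > 5$ is precisely what guarantees that no primitive root satisfies $\alpha^4 = 1$, so $\alpha^4-1$ is always a genuine residue or non-residue and never $0$.

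Write $\eta$ for the quadratic multiplicative character of $\eff_q$ (extended by $\eta(0)=0$), and let $N$ denote the number of primitive roots $\alpha$ with $\eta(\alpha^4-1) = -1$. Using $\tfrac{1}{2}(1-\eta(\alpha^4-1))$ as the indicator of the non-residue condition, and the fact that there are $\phi(q-1)$ primitive roots (none with $\alpha^4=1$), I would write
\[
N = \frac{1}{2}\left(\phi(q-1) - \sum_{\alpha} \mathbf{1}[\alpha \text{ primitive}]\,\eta(\alpha^4-1)\right),
\]
so that the whole problem becomes showing the correlation sum on the right is strictly smaller than $\phi(q-1)$.

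To bound that sum I would insert the standard indicator for primitive roots,
\[
\mathbf{1}[\alpha \text{ primitive}] = \frac{\phi(q-1)}{q-1}\sum_{d \mid q-1}\frac{\mu(d)}{\phi(d)}\sum_{\operatorname{ord}\chi = d}\chi(\alpha),
\]
interchange summation, and apply Weil's bound to each mixed sum $T_\chi = \sum_{\alpha}\chi(\alpha)\eta(\alpha^4-1)$. Because $x^4-1=(x-1)(x+1)(x^2+1)$ has four distinct roots, none of these sums is degenerate, and $|T_\chi| \le 4\sqrt{q}$ for every $\chi$. Summing the contributions and using $\sum_{\operatorname{ord}\chi=d}1 = \phi(d)$ gives
\[
\left|\sum_{\alpha}\mathbf{1}[\alpha \text{ primitive}]\,\eta(\alpha^4-1)\right| \le \frac{\phi(q-1)}{q-1}\cdot 4\sqrt{q}\cdot 2^{\omega(q-1)},
\]
where $\omega(q-1)$ is the number of distinct prime divisors of $q-1$. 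The factor $\phi(q-1)$ then cancels in the positivity condition, and I conclude $N>0$ as soon as $4\cdot 2^{\omega(q-1)}\sqrt{q} < q-1$.

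The final step is to observe that $2^{\omega(q-1)}$ grows more slowly than any power of $q$, so $4\cdot 2^{\omega(q-1)}\sqrt q < q-1$ holds for all $q$ beyond an explicit threshold; the finitely many primes $q = 4m+1$ below that threshold (those whose $q-1$ has unusually many small prime factors) are then checked directly, exhibiting a suitable primitive root in each case. I expect the main obstacle to be the clean execution of the Weil estimate --- pinning down the admissible constant in $|T_\chi| \le 4\sqrt q$ and confirming that the trivial and low-order characters cause no degeneracy --- together with carrying out the small-prime verification carefully so that no case between $q=13$ and the threshold is overlooked. Everything else is bookkeeping once the cancellation of $\phi(q-1)$ is in hand.
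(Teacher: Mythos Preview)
Your proposal is correct and follows essentially the same route as the paper: reduce via Theorem~\ref{thm1} to the existence of a primitive root $\alpha$ with $\alpha^4-1$ a non-residue, handle large $q$ by an analytic bound, and finish the remaining primes by direct computation. The paper is in fact even terser than your sketch: it simply cites \cite{VS} for the statement that such an $\alpha$ exists whenever $q>7.867\times 10^8$ (the Weil/character-sum argument you spell out is presumably what underlies that threshold) and then reports a computer verification for all primes $q\equiv 1\pmod 4$ with $5<q<10^9$. So your contribution is to make explicit the mechanism---the primitive-root indicator, the mixed Weil sums $T_\chi$, and the resulting inequality $4\cdot 2^{\omega(q-1)}\sqrt{q}<q-1$---that the paper leaves inside the black box of the cited reference; the tradeoff is that you must then carefully justify the non-degeneracy and the constant $4$ in the Weil bound (as you anticipate), whereas the paper sidesteps this by citation.
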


\begin{proof}
Theorem \ref{thm1} states that the desired CEDF exists if there is a primitive element $\alpha \in {\eff_q}$ such that $\alpha^4-1$ is a quadratic non-residue. It was noted in \cite{VS} that the existence of such an $\alpha$ could be proven for all 
relevant primes and prime powers $q > 7.867 \times 10^8$. For all primes $5 < q <  10^9$ such that $q \equiv 1 \pmod{4}$, we verified computationally that there is an $\alpha$ satisfying the desired properties.
\end{proof}

\begin{remark}
{\rm For $1 \leq c \leq m-1$, Veitch and Stinson \cite{VS} defined $(q,m,\ell;\lambda)$-$c$-CEDF as a generalization of $(q,m,\ell;\lambda)$-CEDF. A $(q,m,\ell;\lambda)$-$1$-CEDF is the same thing as a $(q,m,\ell;\lambda)$-CEDF.
Wu, Yang and Feng prove in \cite[Corollary 1]{feng} that a $(q,m,2;1)$-$c$-CEDF can be constructed in $\eff_q$ for some positive integer $c \leq m-1$, provided that $q \equiv  1 \bmod 4$ is a prime or prime power and $q \geq 13$. Our Theorem \ref{ell=2.thm} proves the existence of $(q,m,2;1)$-$c$-CEDF with $c = 1$, but only for primes $q \equiv  1 \bmod 4$, $q \geq 13$. However, we expect that the condition on $\alpha$ used in the proof of Theorem \ref{ell=2.thm} could also be verified for prime powers, if desired.}
\end{remark}

\section{Strong Circular AMD Codes and SCEDFs}
\label{strong.sec}

In this section, we turn our attention to strong CEDFs and strong circular AMD codes. 
It is clear that any $(n,2,\ell;\lambda)$-SEDF is automatically an $(n,2,\ell;\lambda)$-SCEDF.
We also have the following general characterization of SCEDF in terms of SEDF, which follows immediately from the definitions.

\begin{theorem}
\label{T3}
Let $G$ be an additive abelian group of order $n$. A set of $m$ disjoint $\ell$-subsets of $G$, say $\mathcal{A} = \{A_0,\dots,A_{m-1}\}$ is an $(n, m, \ell; \lambda)$-SCEDF if and only if
$\{A_i,A_{i+1 \bmod m}\}$ is an $(n, 2, \ell; \lambda)$-SEDF for all $i$ such that $0 \leq i \leq m-1$.
\end{theorem}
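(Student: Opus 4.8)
The plan is to show that both the SCEDF property of $\mathcal{A}$ and the property ``$\{A_i, A_{i+1 \bmod m}\}$ is an SEDF for every $i$'' are equivalent to one and the same family of multiset equations, namely $\mathcal{D}(A_{i+1 \bmod m}, A_i) = \lambda(G \setminus \{0\})$ for all $i \in \{0, \dots, m-1\}$. The one nontrivial ingredient is the elementary observation that, for disjoint subsets $A, B \subseteq G$, the multisets $\mathcal{D}(A,B)$ and $\mathcal{D}(B,A)$ are negatives of one another, since the differences $y - x$ (for $y \in B$, $x \in A$) range over exactly the negatives of the differences $x - y$. Because $G \setminus \{0\}$ is invariant under the map $g \mapsto -g$, it follows that $\mathcal{D}(A,B) = \lambda(G \setminus \{0\})$ holds if and only if $\mathcal{D}(B,A) = \lambda(G \setminus \{0\})$ holds.

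Next I would unpack the definition of an $(n,2,\ell;\lambda)$-SEDF applied to the pair $\{A_i, A_{i+1 \bmod m}\}$. Writing $B_0 = A_i$ and $B_1 = A_{i+1 \bmod m}$, the SEDF condition requires, for each of the two indices, that the union of external differences against the other block equal $\lambda(G \setminus \{0\})$; since there is only a single other block in each case, this amounts to the pair of equations $\mathcal{D}(B_0, B_1) = \lambda(G \setminus \{0\})$ and $\mathcal{D}(B_1, B_0) = \lambda(G \setminus \{0\})$. By the negation symmetry from the previous step, these two equations are equivalent to one another, so the SEDF condition on $\{A_i, A_{i+1 \bmod m}\}$ collapses to the single requirement $\mathcal{D}(A_{i+1 \bmod m}, A_i) = \lambda(G \setminus \{0\})$.

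Finally, I would observe that this single equation is precisely the $j = i$ instance of the defining multiset equation for an SCEDF. Letting $i$ (equivalently $j$) range over $\{0, \dots, m-1\}$ therefore shows that ``$\{A_i, A_{i+1 \bmod m}\}$ is an SEDF for every $i$'' and ``$\mathcal{A}$ is an $(n,m,\ell;\lambda)$-SCEDF'' are literally the same collection of conditions, which yields both implications simultaneously. I do not anticipate any genuine obstacle; the only point deserving care is the use of the negation symmetry of $G \setminus \{0\}$ to fold the two SEDF equations into one, since this is exactly what makes the per-index correspondence an equivalence rather than a single implication.
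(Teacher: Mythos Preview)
Your proposal is correct and matches the paper's approach: the paper simply states that the result ``follows immediately from the definitions,'' and your argument is exactly the unpacking of those definitions, with the one mildly nontrivial point (the negation symmetry $\mathcal{D}(A,B) = -\mathcal{D}(B,A)$ together with $-(G\setminus\{0\}) = G\setminus\{0\}$) made explicit.
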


Numerous constructions for $(n,2,\ell;\lambda)$-SEDF are known 
(for a recent summary, see Huczynska, Jefferson and Nep\v{s}insk\'{a} \cite{HucJefNep}), so we immediately obtain $(n,2,\ell;\lambda)$-SCEDF from them. 

Wu, Yang and Feng \cite{feng} used the group ring $\mathbb{Z}[G]$ to prove that there do not exist three disjoint sets $A_0$, $A_1$, and $A_2$ in an abelian group $G$ such that $\{A_0,A_1\}$ and $\{A_1,A_2\}$ are both SEDFs. It therefore follows immediately from Theorem \ref{T3} that SCEDFs with $m \geq 3$ do not exist. Thus, it is of interest to construct ``near-optimal''
strong circular AMD codes with $m > 2$, which can be done using classical results on cyclotomic numbers. We pursue this approach in the next section.

\subsection{Constructions Using Cyclotomy}

We now discuss how cyclotomic classes in finite fields can be used to construct near-optimal strong circular AMD codes.
The error probabilities of the AMD codes depend on known results concerning cyclotomic numbers.

Following \cite{BJWZ,Ding}, let $q = ef+1$ be prime  and let $\alpha \in \eff_q$ be a primitive element. Define
$C_0 = \{ \alpha^{je}: 0 \leq j \leq f-1\}$ and define $C_i = \alpha^i C_0$ for $1 \leq i \leq e-1$.
$C_0, \dots , C_{e-1}$ are the \emph{cyclotomic classes of index $e$}. We note that $\alpha^j C_i = C_{i+j \bmod e}$ for all $i,j$.

For a subset $S \subseteq \eff_q$ and $x \in \eff_q$, let $S + x = \{ s + x: s \in S\}$. 
The \emph{cyclotomic numbers of order $e$} are the integers denoted $(i,j)_e$ ($0 \leq i,j\leq e-1$) that are defined as follows:
\[ (i,j)_e = |  (C_i +1) \cap C_j  | .
\]

Obviously $\mathcal{A} = (C_0, \dots , C_{e-1} )$ can be viewed as a strong circular $(q,e,f)$-AMD code. The security of 
$\mathcal{A}$ depends on the cyclotomic numbers $(i,i+1 \bmod e)_e$, $0 \leq i \leq e-1$, as shown in the following theorem.

\begin{theorem}
\label{L1}
Let $\lambda = \max \{ (i,i+1 \bmod e)_e : 0 \leq i \leq e-1\}$. Then $\mathcal{A} = \{C_0, \dots , C_{e-1} \}$ is 
an $\epsilon$-secure  strong circular $(q,e,f)$-AMD code, where
$\epsilon = \lambda / f$. 
\end{theorem}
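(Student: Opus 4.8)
The plan is to unwind the definition of the strong circular AMD game and show that the adversary's best strategy corresponds exactly to choosing a $\Delta$ that maximizes a relevant cyclotomic number. First I would fix a source $i$. In the strong game the adversary sees $i$, then chooses $\Delta \neq 0$, and then $g$ is drawn uniformly from $C_i$; the adversary wins iff $g + \Delta \in C_{i+1 \bmod e}$. Thus for a fixed $i$ and a fixed $\Delta$, the winning probability is
\[
\Pr[g + \Delta \in C_{i+1}] = \frac{|\{g \in C_i : g + \Delta \in C_{i+1}\}|}{|C_i|} = \frac{|(C_i + \Delta) \cap C_{i+1}|}{f},
\]
since $|C_i| = f$. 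The adversary, knowing $i$, maximizes over $\Delta$, so the success probability for source $i$ is $\max_{\Delta \neq 0} |(C_i + \Delta)\cap C_{i+1}|/f$. The overall winning probability $\epsilon$ is the maximum of this quantity over all $i$, because the adversary can be assumed to use the source value $i$ optimally.

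The key step is to identify $|(C_i + \Delta) \cap C_{i+1 \bmod e}|$ with a cyclotomic number. The defined cyclotomic number is $(i,j)_e = |(C_i + 1) \cap C_j|$, so I need to relate an arbitrary nonzero shift $\Delta$ to the shift by $1$. I would use the multiplicative homogeneity of the cyclotomic classes: since $\alpha^k C_i = C_{i+k \bmod e}$, multiplying the whole configuration by a field element permutes the classes cyclically. Concretely, write $\Delta = \alpha^r$ for some exponent $r$, and note $r \equiv s \pmod e$ for some $s$ with $0 \le s \le e-1$, so $\Delta \in C_s$ and $\Delta = c\cdot 1$ scaled appropriately. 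Multiplying $(C_i + \Delta)\cap C_{i+1}$ by $\Delta^{-1}$ gives $(\Delta^{-1}C_i + 1)\cap \Delta^{-1}C_{i+1} = (C_{i-s} + 1)\cap C_{i+1-s}$, which has cardinality $(i-s,\, i+1-s)_e$. The crucial observation is that $(i-s) - (i+1-s) \equiv -1 \pmod e$ regardless of $s$; that is, for every nonzero $\Delta$ the intersection size equals some cyclotomic number $(k, k+1 \bmod e)_e$ with $k = i - s \bmod e$.

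Therefore, as $\Delta$ ranges over all nonzero field elements and $s$ ranges over $0,\dots,e-1$, the values $(i-s, i+1-s \bmod e)_e$ range over exactly the set $\{(k, k+1 \bmod e)_e : 0 \le k \le e-1\}$. Hence for each fixed source $i$, the best the adversary can do is $\max_k (k, k+1 \bmod e)_e$, which does not actually depend on $i$. Taking the maximum over sources is then vacuous, and the overall success probability is
\[
\epsilon = \frac{1}{f}\max\{(k, k+1 \bmod e)_e : 0 \le k \le e-1\} = \frac{\lambda}{f},
\]
as claimed. I would conclude by observing that within each class $C_s$ all choices of $\Delta$ give the same intersection size, so it suffices to realize each residue $s$ once.

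I expect the main obstacle to be the bookkeeping in the scaling step: one must be careful that multiplying a shift-by-$\Delta$ intersection by $\Delta^{-1}$ correctly transforms $C_i + \Delta$ into $\Delta^{-1}C_i + 1$ and that the index arithmetic stays consistent modulo $e$. The payoff of that care is the clean cancellation showing the difference of indices is always $-1$, which is what forces the maximum to be over the single family $(k,k+1)_e$ rather than over all pairs. A minor secondary point is justifying that the adversary gains nothing from randomizing $\Delta$, since the success probability is linear in the adversary's distribution over $\Delta$ and a maximizing pure choice always exists.
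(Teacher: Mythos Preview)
Your proposal is correct and follows essentially the same approach as the paper: fix a source, compute the winning probability as $|(C_i+\Delta)\cap C_{i+1}|/f$, and rescale by $\Delta^{-1}$ to reduce the intersection to a cyclotomic number of the form $(k,k+1)_e$, then maximize over $\Delta$. The paper carries this out only for the source $i=0$ and says the other cases are similar, whereas you treat a general source and explicitly observe that the maximum is independent of the source, but the underlying argument is the same.
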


\begin{proof}
First, suppose that $i=0$ in the strong circular AMD game and the adversary chooses $\Delta \neq 0$. The adversary wins if $g + \Delta \in C_1$, where $g \in C_0$ is chosen uniformly at random. The probability $p_{\Delta}$ that the adversary wins is $|  \{ (C_0 + \Delta) \cap C_1\} | / f$. However,
\begin{eqnarray*} |(C_0 + \Delta) \cap C_1 | &=& |\Delta^{-1}(C_0 + \Delta) \cap \Delta^{-1}C_1 |\\
&=& |(\Delta^{-1}C_0 + 1) \cap \Delta^{-1}C_1| \\
&=& |(C_i + 1) \cap C_{i + 1 \bmod e}|, \quad\quad\text{where $\Delta^{-1} \in C_i$}.
\end{eqnarray*}
So \[p_{\Delta} = \frac{(i,i+1 \bmod e)_e }{ f},\] where $\Delta^{-1} \in C_i$. 
It follows that \[
\max \{ p_{\Delta} : \Delta \neq 0 \} = \frac{\max \{ (i,i+1 \bmod e)_e : 0 \leq i \leq e-1\}}{f} = \frac{\lambda}{f}.\]
If $i \neq 0$, the analysis is similar.
\end{proof}

There are $e$ cyclotomic numbers under consideration and their sum is $f$. It therefore follows immediately that $\lambda \geq f/e$ (where $\lambda$ is defined in Theorem \ref{L1}). Recall that $q = ef+1$. An R-optimal strong circular $(q,e,f)$-AMD would have $\epsilon = f/(q-1) = 1/e$.  We would obtain $\epsilon  = 1/e$ in Theorem \ref{L1} if and only if all the relevant cyclotomic numbers were equal to $f/e$. This can happen only if $e \mid f$, i.e., if $q \equiv 1 \bmod e^2$. However, we know of no situations with $e >3$ where equality actually occurs. 

\medskip

To illustrate this approach, suppose $q \equiv 1 \bmod 8$ is prime and we take $e=4$ in Theorem \ref{L1}. The security of the resulting strong circular $(q,4,(q-1)/4)$-AMD code
depends on the cyclotomic numbers $(0,1)_4$, $(1,2)_4$, $(2,3)_4$ and $(3,0)_4$.
The values of these cyclotomic numbers are computed as follows. First write $q = u^2 + 4v^2$, where $u \equiv 1 \bmod 4$. The value of $u$ is determined uniquely but the sign of $v$ is undetermined. Then (see, e.g., \cite{Ding}) we have
\begin{eqnarray*}
(0,1)_4 & = & \frac{q-3+2u + 8v}{16}\\
(1,2)_4 & = & \frac{q+1-2u}{16}\\
(2,3)_4 & = & \frac{q+1-2u}{16}\\
(3,0)_4 & = & \frac{q-3+2u - 8v}{16}.
\end{eqnarray*} 
Switching the sign of $v$ has the effect of interchanging the values of $(0,1)_4$ and $(3,0)_4$, but the resulting value of $\lambda$ is not affected.

\begin{example}
Suppose $q =17 = 4 \times 4 + 1$. We have $17 = 1^2 + 4 \times 2^2$, so $u = 1$ and $v = \pm 2$. 
The largest of the four cyclotomic numbers is \[\frac{17 - 3 + 2 + 16}{16} = 2.\] So we obtain a
$1/2$-secure strong circular $(17,4,4)$-AMD code from Theorem \ref{L1}. An R-optimal strong circular $(17,4,4)$-AMD code would have $\epsilon = 1/4$. 

Here are the details. If we start with the primitive element $3$, then 
\begin{eqnarray*}
C_0 &=& \{1,4,13,16\}\\
C_1 &=& \{3,5,12,14\}\\
C_2 &=& \{2,8,9,15\}\\
C_3 &=& \{6,7,10,11\}.
\end{eqnarray*}
Then we have $(0,1)_4 = 2$, $(1,2)_4 = 1$, $(2,3)_4 = 1$ and $(3,0)_4 = 0$.
 
$\mathcal{D}(C_1,C_0)$ contains two occurrences of each of $1,4,13$ and $16$ and one occurrence of each of $2,6,7,8,9,10,11$ and $15$.
Therefore, when $g \in C_0$, the adversary can choose $\Delta = 1,4,13$ or $16$ to win the strong circular AMD game with probability $1/2$. For example, $\Delta = 1$ results in a successful deception when the encoded source is $4$ or $13$. When $g \in C_1, C_2$ or $C_3$, we can determine the optimal choices for $\Delta$ by examining
$\mathcal{D}(C_2,C_1)$, $\mathcal{D}(C_3,C_2)$ or $\mathcal{D}(C_0,C_3)$ (resp.).
$\blacksquare$\end{example}

\begin{example}
Suppose $q = 97 = 4 \times 24 + 1$. We have $97 = 9^2 + 4\times 2^2$, so $u = 9$ and $v = \pm 2$. 
The largest of the four cyclotomic numbers is \[\frac{97 - 3 + 18 + 16}{16} = 8.\] So we obtain a
$1/3$-secure  strong circular $(97,4,24)$-AMD code from Theorem \ref{L1}. An R-optimal strong circular $(97,4,24)$-AMD code would have $\epsilon = 1/4$. $\blacksquare$
\end{example}

We can also analyse the asymptotic behaviour of this approach. Suppose we maximize the function
\[ \frac{q-3+2u + 8v}{16q/4} = \frac{q-3+2u + 8v}{4q}\]
subject to the constraint $q = u^2 + 4v^2$. Using elementary calculus, we see that
$  2u + 8v \leq 2\sqrt{5} \sqrt{q}$. Hence,
\[ \epsilon < \frac{q+ 2\sqrt{5} \sqrt{q}}{4q} = \frac{1}{4} + \frac{\sqrt{5}}{2}q^{-1/2}.\]
We have proven the following result.

\begin{theorem}
Suppose $q \equiv 1 \bmod 8$ is a prime power. Then there is an $\epsilon$-secure strong circular $(q,4,(q-1)/4)$-AMD code with $\epsilon <  \frac{1}{4} + \frac{\sqrt{5}}{2}q^{-1/2}$.
\end{theorem}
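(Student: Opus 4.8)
The plan is to instantiate Theorem~\ref{L1} at $e=4$ and reduce everything to a single upper bound on the largest relevant cyclotomic number. The four index-$4$ cyclotomic classes $\mathcal{A}=\{C_0,C_1,C_2,C_3\}$ form a strong circular $(q,4,(q-1)/4)$-AMD code whose security parameter is $\epsilon=\lambda/f$, where $f=(q-1)/4$ and $\lambda=\max\{(0,1)_4,(1,2)_4,(2,3)_4,(3,0)_4\}$ by Theorem~\ref{L1}. Thus the whole statement follows once we bound $\lambda$ above by $\tfrac{q}{16}+O(\sqrt q)$ with the right constant and then divide by $f$.

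Next I would substitute the closed forms $(0,1)_4=\frac{q-3+2u+8v}{16}$, $(3,0)_4=\frac{q-3+2u-8v}{16}$, and $(1,2)_4=(2,3)_4=\frac{q+1-2u}{16}$ (valid since $q\equiv 1\pmod 8$), where $q=u^2+4v^2$ and $u\equiv 1\pmod 4$, and bound all four by the single quantity $\frac{q-3+2\sqrt5\,\sqrt q}{16}$. For the two numbers carrying the $\pm 8v$ term this is the Cauchy--Schwarz estimate $2u\pm 8v\le 2\sqrt5\,\sqrt q$ proved below; for $(1,2)_4=(2,3)_4$ it instead uses $-2u\le 2|u|\le 2\sqrt q$ together with the elementary comparison $q+1+2\sqrt q\le q-3+2\sqrt5\,\sqrt q$, which holds for all $q\ge 3$. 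The point worth flagging here --- and the step where a careless argument goes astray --- is that one cannot simply assume the $\pm 8v$ terms dominate: for square values of $q$ (for instance $q=49$, where $v=0$) it is in fact $(1,2)_4$ that realises the maximum, so all four numbers genuinely have to be controlled.

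The only analytic input is the constrained maximization of $2u+8v$ on the ellipse $u^2+4v^2=q$. Writing $2u+8v=2u+4(2v)$ and applying Cauchy--Schwarz to the vectors $(2,4)$ and $(u,2v)$ gives $(2u+8v)^2\le(2^2+4^2)(u^2+(2v)^2)=20q$, hence $2u+8v\le 2\sqrt5\,\sqrt q$ (a one-line Lagrange-multiplier computation yields the same bound). Combining with the previous paragraph gives $\lambda\le\frac{q-3+2\sqrt5\,\sqrt q}{16}$, and dividing by $f=(q-1)/4$ yields $\epsilon\le\frac{q-3+2\sqrt5\,\sqrt q}{4(q-1)}$.

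Finally I would verify that this last bound lies strictly below $\frac14+\frac{\sqrt5}{2}q^{-1/2}$. Using $\frac{q-3}{4(q-1)}=\frac14-\frac{1}{2(q-1)}$ and rearranging, the desired inequality collapses, after clearing denominators, to $\sqrt q>\sqrt 5$, i.e.\ $q>5$, which holds for every admissible prime power (the smallest being $q=9$). The main obstacle is thus not the calculus --- which is a routine Cauchy--Schwarz estimate --- but the bookkeeping: one must confirm that the chosen bound on $\lambda$ really dominates \emph{all four} cyclotomic numbers irrespective of the undetermined sign of $v$ and the possible negativity of $u$, and one must keep the distinction between $f=(q-1)/4$ and $q/4$ straight in the denominator so that the stated constant $\frac{\sqrt5}{2}$ comes out exactly rather than only asymptotically.
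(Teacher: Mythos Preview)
Your argument is correct and follows the same route as the paper: instantiate Theorem~\ref{L1} at $e=4$, plug in the standard closed forms for the order-$4$ cyclotomic numbers, bound $2u+8v$ on the ellipse $u^2+4v^2=q$, and divide through. Your version is in fact more careful than the paper's in two places. First, the paper only bounds the cyclotomic number $\frac{q-3+2u+8v}{16}$ and implicitly assumes it realises the maximum, whereas you explicitly verify that $(1,2)_4=(2,3)_4=\frac{q+1-2u}{16}$ is also dominated by $\frac{q-3+2\sqrt5\sqrt q}{16}$; your $q=49$ example shows this case is genuine. Second, the paper replaces $f=(q-1)/4$ by $q/4$ in the denominator (writing $\epsilon$ as $\frac{q-3+2u+8v}{4q}$), which technically points the inequality the wrong way at that intermediate step, even though the final bound still comes out correct after dropping the $-3$; you keep $f=(q-1)/4$ throughout and do the honest comparison, which cleanly reduces to $q>5$. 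The optimisation itself is identical in content: the paper says ``elementary calculus'' while you use Cauchy--Schwarz on $(2,4)\cdot(u,2v)$, but both yield $2u+8v\le 2\sqrt5\sqrt q$.
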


The cases where $q \equiv 5 \bmod 8$ is a prime can be handled analogously, except that the formulas for
the cyclotomic numbers are different.
As before, write $q = u^2 + 4v^2$, where $u \equiv 1 \bmod 4$ and the sign of $v$ is undetermined. Then we have
\begin{eqnarray*}
(0,1)_4 & = & \frac{q+1+2u - 8v}{16}\\
(1,2)_4 & = & \frac{q+1+2u + 8v}{16}\\
(2,3)_4 & = & \frac{q-3-2u}{16}\\
(3,0)_4 & = & \frac{q-3-2u}{16}.
\end{eqnarray*} 
We leave it to the reader to fill in the remaining details. 

\bigskip

For $e = 3$, we suppose $q \equiv 1 \bmod 6$ is a prime. The security of the resulting strong circular 
$(q,3,(q-1)/3)$-AMD code
depends on the cyclotomic numbers $(0,1)_3$, $(1,2)_3$ and $(2,0)_3$, which were originally determined by Gauss.
To compute these cyclotomic numbers, write $4q = u^2 + 27v^2$, where $u \equiv 1 \bmod 3$. Then we have
\begin{eqnarray*}
(0,1)_3 & = & \frac{2q-4 - u + 9v}{18}\\
(1,2)_3 & = & \frac{q+1+u}{18}\\
(2,0)_3 & = & \frac{2q-4 - u - 9v}{18}.
\end{eqnarray*} 
The sign of $v$ is again undetermined, but  $\lambda$  has the value 
\[  \frac{2q-4 - u + 9|v| }{18} .\]

\begin{example}
Suppose $q = 103$. We have \[4 \times 103 = 412 = 13^2 + 27\times 3^2,\] so $u = 13$ and $v = \pm 3$. 
The largest of the three cyclotomic numbers is \[\frac{2 \times 103 - 4 - 13 + 9 \times 3}{16} = 12.\] Since $f = 34$, we obtain a
$6/17$-secure strong circular $(103,3,34)$-AMD code from Theorem \ref{L1}. An optimal code (which does not exist) would be $1/3$-secure. $\blacksquare$
\end{example}

Various other values of $e$ can of course be considered in Theorem \ref{L1} and results can be obtained using known formulas for the relevant cyclotomic numbers.

\subsection{SEDFs and Tilings}
\label{tilings.sec}

In this section, we show that certain specific SEDF consisting of two sets cannot be ``extended'' to  SCEDF having $m \geq 3$ sets.
Of course this follows immediately as a consequence the general nonexistence result from \cite{feng}.  However, since the ``tiling'' technique we use in this section is a structural combinatorial proof (as opposed to the algebraic approach employed in \cite{feng}), we thought it might be of interest to briefly describe our results.

First, we consider the  ``standard'' $(\ell^2+1, 2, \ell; 1)$-SEDF in $\zed_{\ell^2+1}$ (see \cite{PS16}), consisting of the two sets 
$\{0, 1, \dots , \ell-1 \}$ and $\{\ell, 2\ell, \dots, \ell^2\}$.

Suppose $A_0 \subseteq G$, where $|G| = n$, $|A_0| = \ell$, and $n \equiv 1 \bmod \ell$. Denote $k = (n-1)/\ell$. A \emph{tiling} of $A_0$ is a set 
$T = \{ t_1, \dots , t_k\} \subseteq G$ such that
\[ \bigcup _{i = 1}^{k} (A_0 + t_i) = G \setminus \{0\}.\]
We refer to each set $A_0 + t_i$ as a \emph{translate} of $A_0$. 

\begin{lemma}
There exists an $(n, 2, \ell; 1)$-SEDF in $G$ if and only if there is a set $A_0 \subseteq G$ that has a tiling, where $|A_0| = \ell$ and $|G| = \ell^2+1$.
\end{lemma}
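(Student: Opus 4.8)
The plan is to show that an $(n,2,\ell;1)$-SEDF and a tiling of an $\ell$-set are two encodings of the same object, linked by the substitution $T = -A_1$. The starting point is to simplify the SEDF condition in the case $m=2$, $\lambda=1$. By definition, $\{A_0,A_1\}$ is such an SEDF precisely when $\mathcal{D}(A_0,A_1) = G \setminus \{0\}$ and $\mathcal{D}(A_1,A_0) = G \setminus \{0\}$ hold as multiset equations. Since $\mathcal{D}(A_1,A_0) = -\mathcal{D}(A_0,A_1)$ and the multiset $G \setminus \{0\}$ is invariant under negation, these two conditions are equivalent, so the SEDF is characterized by the single multiset equation $\mathcal{D}(A_0,A_1) = G \setminus \{0\}$. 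I would also record the parameter count: with $|G| = \ell^2+1$ we have $n \equiv 1 \pmod{\ell}$ and $k = (n-1)/\ell = \ell$, so a tiling of $A_0$ consists of exactly $\ell$ translates.

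For the forward direction, given an SEDF $\{A_0,A_1\}$, I would set $T = -A_1 = \{-x : x \in A_1\}$, a set of $\ell$ elements. Rewriting $\mathcal{D}(A_0,A_1)$ as a union of translates of $A_0$ gives
\[
\bigcup_{t \in T}(A_0 + t) = \bigcup_{x \in A_1}(A_0 - x) = \{y - x : y \in A_0,\ x \in A_1\} = \mathcal{D}(A_0,A_1) = G \setminus \{0\},
\]
so $T$ is a tiling of $A_0$. For the converse, given a tiling $T = \{t_1,\dots,t_\ell\}$ of $A_0$, I would set $A_1 = -T$, which has size $\ell$, and run the same computation backwards to obtain $\mathcal{D}(A_0,A_1) = \bigcup_{t \in T}(A_0 + t) = G \setminus \{0\}$, recovering the single SEDF condition.

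The one point that needs care --- and the closest thing to an obstacle --- is disjointness of $A_0$ and $A_1$, which the SEDF definition requires but the tiling definition does not state explicitly. This comes for free: since $\mathcal{D}(A_0,A_1) = G \setminus \{0\}$ does not contain $0$, no element of $A_0$ can coincide with an element of $A_1$ (such a coincidence would contribute the difference $0$), so $A_0 \cap A_1 = \emptyset$ and $\{A_0,A_1\}$ is a genuine SEDF. Finally I would note that the multiset formulation is what makes the equivalence exact: each of the $\ell$ translates $A_0 + t_i$ has size $\ell$, for a total of $\ell^2 = n-1$ elements, so the union equalling $G \setminus \{0\}$ forces the translates to be pairwise disjoint and to partition $G \setminus \{0\}$, which is precisely the statement that every nonzero difference in $\mathcal{D}(A_0,A_1)$ occurs exactly once.
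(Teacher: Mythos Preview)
Your proof is correct and follows essentially the same approach as the paper: both use the substitution $T=-A_1$ (and $A_1=-T$ for the converse) to translate between the SEDF condition and the tiling condition. Your version is more detailed, explicitly verifying the disjointness of $A_0$ and $A_1$ and the reduction to a single multiset equation, points the paper leaves implicit.
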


\begin{proof}
First, suppose $\{A_0,A_{1}\}$ is an $(\ell^2+1, 2, \ell; 1)$-SEDF in $\zed_{\ell^2+1}$. Let $T = -A_1$. Then $T$ is a tiling of $A_0$. Conversely, if $T$ is a tiling of $A_0$, it is easy to see that $\{A_0,-T\}$ is an $(\ell^2+1, 2, \ell; 1)$-SEDF.
\end{proof}

\begin{theorem}
\label{T2.2}
Suppose that $A_0 \subseteq G = \zed_{\ell^2+1}$ has a unique tiling, where $|A_0| = \ell$. Then the set $A_0$ cannot occur in any $(n, m, \ell; 1)$-SCEDF with $m \geq 3$.
\end{theorem}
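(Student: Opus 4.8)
The plan is to translate the single-family uniqueness hypothesis into two tilings of $A_0$, extracted from the two cyclic constraints that involve $A_0$, and then to show that these two tilings are necessarily distinct once $m \geq 3$, contradicting uniqueness.

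First I would invoke Theorem~\ref{T3}. If $A_0$ occurs in an $(n,m,\ell;1)$-SCEDF $\{A_0,\dots,A_{m-1}\}$, then necessarily $n = \ell^2+1$ (since $\lambda = 1$ forces $\ell^2 = n-1$), and each consecutive pair $\{A_i,A_{i+1\bmod m}\}$ is an $(\ell^2+1,2,\ell;1)$-SEDF. In particular the two pairs involving $A_0$, namely $\{A_{m-1},A_0\}$ and $\{A_0,A_1\}$, are both SEDFs. Because a two-set SEDF satisfies the difference condition in both directions, I may use precisely the directions $\mathcal{D}(A_0,A_{m-1}) = G \setminus \{0\}$ and $\mathcal{D}(A_0,A_1) = G \setminus \{0\}$; these are the ones that produce tilings \emph{of} $A_0$, rather than of a neighbour.

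Next, following the construction in the proof of the preceding Lemma, the equation $\mathcal{D}(A_0,B) = G \setminus \{0\}$ is equivalent to the statement that $-B$ is a tiling of $A_0$, since $\bigcup_{b \in B}(A_0 - b) = \mathcal{D}(A_0,B)$ and the $\lambda = 1$ condition makes this union a partition of $G \setminus \{0\}$. Taking $B = A_{m-1}$ and then $B = A_1$, I conclude that both $-A_{m-1}$ and $-A_1$ are tilings of $A_0$. By the hypothesis that $A_0$ admits a \emph{unique} tiling, these coincide, forcing $A_1 = A_{m-1}$.

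The final step, and the only point that needs care, is to extract a contradiction from $A_1 = A_{m-1}$. When $m \geq 3$ the indices $1$ and $m-1$ are distinct modulo $m$, so $A_1$ and $A_{m-1}$ are two different members of the family; since the sets of an SCEDF are pairwise disjoint and nonempty, they cannot be equal, giving the contradiction. I expect the main subtlety to be purely organisational, namely verifying that it is the pairs $\{A_{m-1},A_0\}$ and $\{A_0,A_1\}$ together with the difference direction $\mathcal{D}(A_0,\cdot)$ that yield tilings of $A_0$, and confirming that the index inequality $1 \not\equiv m-1 \pmod m$ holds exactly when $m \geq 3$ (it fails at $m = 2$, consistent with the fact that two-set SCEDFs do exist).
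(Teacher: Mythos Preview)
Your proposal is correct and follows exactly the same route as the paper: extract from the SCEDF the two $2$-set SEDFs adjacent to $A_0$, convert each into a tiling of $A_0$ via $T=-B$, invoke uniqueness to force $A_1=A_{m-1}$, and contradict disjointness for $m\geq 3$. The paper's proof is terse (it jumps straight from ``the tiling is unique'' to ``$A_1=A_{m-1}$''), whereas you spell out why both $-A_1$ and $-A_{m-1}$ are tilings of $A_0$ and handle the direction-reversal $\mathcal{D}(A_0,A_1)=-\mathcal{D}(A_1,A_0)$ explicitly; but there is no substantive difference in strategy.
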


\begin{proof}
Suppose $A_0, \dots , A_{m-1}$ is an $(n, m, \ell; 1)$-SCEDF, where $m \geq 3$.
Since the tiling of $A_0$ is unique, it follows that $A_1 = A_{m-1}$. But then $A_1$ and $A_{m-1}$ are not disjoint, which is not allowed in an SCEDF when $m \geq 3$.
\end{proof}

\begin{lemma}
\label{L2.3}
Suppose $G = \zed_{\ell^2+1}$. Then the set $A_0 = \{0, 1, \dots , \ell-1 \}$ has a unique tiling, namely, $T = \{1, \ell+1, \dots , \ell^2 - \ell +1\}$. 
\end{lemma}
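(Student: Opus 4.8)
The plan is to establish uniqueness by a counting argument that forces any tiling to be a partition, combined with a rigidity argument for tilings of an interval by sub-intervals. Since $n = \ell^2 + 1$, the parameter $k = (n-1)/\ell$ equals $\ell$, so any tiling of $A_0$ consists of exactly $\ell$ translates $A_0 + t_i$, each of size $\ell$. Their union is $G \setminus \{0\}$, which has $\ell^2$ elements, while the total number of elements counted with multiplicity is $\ell \cdot \ell = \ell^2$. Hence $\ell^2 = |G \setminus \{0\}| = |\bigcup_i (A_0 + t_i)| \leq \sum_i |A_0 + t_i| = \ell^2$, and equality forces the $\ell$ translates to be pairwise disjoint. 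Thus every tiling is in fact a partition of $G \setminus \{0\}$ into $\ell$ translates of $A_0$.

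The key structural observation is that, in this partition, each translate is a block of consecutive integers. Since $A_0 = \{0, 1, \dots, \ell-1\}$, a translate $A_0 + t = \{t, t+1, \dots, t+\ell-1\}$ is an arc of $\ell$ consecutive residues in the cyclic group $\zed_{\ell^2+1}$. Because this arc must avoid $0$, and any arc that wraps from $\ell^2$ back down to $1$ would necessarily pass through $0$, no translate can wrap around. Therefore every translate is a genuine interval $\{c, c+1, \dots, c+\ell-1\}$ of integers contained in $\{1, 2, \dots, \ell^2\}$.

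It then remains to argue that the only way to partition the integer interval $\{1, \dots, \ell^2\}$ into $\ell$ consecutive blocks of length $\ell$ is the obvious one. I would do this by induction from the left: the block containing $1$ cannot extend below $1$, so it must be $\{1, \dots, \ell\}$; the block containing the smallest remaining element $\ell+1$ must then be $\{\ell+1, \dots, 2\ell\}$; and so on. In general the $j$-th block is $\{j\ell + 1, \dots, (j+1)\ell\}$ for $0 \leq j \leq \ell - 1$, so its left endpoint, which is the corresponding value $t_i$, equals $j\ell + 1$. This yields exactly $T = \{1, \ell+1, \dots, \ell^2 - \ell + 1\}$ and no other possibility.

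The only genuinely delicate step is the second one, namely ruling out arcs that wrap around $0$; once each translate is known to be a straight integer interval, the rigidity of interval tilings makes uniqueness immediate. Beyond this I expect no obstacles, apart from the routine verification that the displayed $T$ really does produce the partition $\{1,\dots,\ell\}, \{\ell+1, \dots, 2\ell\}, \dots, \{\ell^2-\ell+1, \dots, \ell^2\}$, confirming that $T$ is indeed a tiling in the first place.
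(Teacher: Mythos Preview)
Your proposal is correct and follows essentially the same approach as the paper's own proof. The paper's argument is very terse---it simply observes that each translate is an interval of $\ell$ consecutive residues and that covering $\{1,\dots,\ell^2\}$ forces the translates to be $\{1,\dots,\ell\},\{\ell+1,\dots,2\ell\},\dots$---whereas you make explicit the counting step forcing disjointness, the no-wraparound observation, and the left-to-right induction, all of which the paper leaves to the reader.
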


\begin{proof} We need to find $\ell$ translates of $A_0 = \{0, 1, \dots , \ell-1 \}$ whose union (modulo $\ell^2+1$) is $\{1, \dots  , \ell^2\}$. Each translate of $A_0$ is an interval of $\ell$ consecutive residues modulo $\ell^2+1$. In order to cover all $\ell^2$ nonzero residues, the translates must be 
\begin{center}
$\{1, \dots , \ell\}$, $\{ \ell+1, \dots 2\ell\}$, $\dots$, $\{ \ell^2 - \ell + 1, \dots \ell^2\}$.
\end{center} Hence $T = \{1, \ell+1, \dots , \ell^2 - \ell +1\}$. 
\end{proof}

\begin{lemma} Suppose $A_0 \subseteq G = \zed_{\ell^2+1}$ has a  tiling $T$, where $|A_0| = \ell$.
Suppose $\gcd (c, \ell^2+1) = 1$ and suppose $d \in \zed_{\ell^2+1}$. Then $cT-d$ is a tiling of $cA_0 + d$.
\end{lemma}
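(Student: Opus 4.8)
The plan is to verify the tiling definition directly for the set $cA_0 + d$ using the candidate tiling $cT - d$. Recall that $T = \{t_1, \dots, t_k\}$ is a tiling of $A_0$ means $\bigcup_{i=1}^k (A_0 + t_i) = G \setminus \{0\}$, a \emph{disjoint} union covering every nonzero element exactly once (since $|A_0| = \ell$, $k = (\ell^2+1-1)/\ell = \ell$ translates of size $\ell$ cover the $\ell^2$ nonzero elements, forcing disjointness). First I would compute the union of the translates of $cA_0 + d$ by the elements of $cT - d$. For each $i$, the translate is $(cA_0 + d) + (ct_i - d) = cA_0 + ct_i = c(A_0 + t_i)$. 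This is the crucial cancellation: the $+d$ and $-d$ annihilate, leaving a clean scaling by $c$ of the original translate $A_0 + t_i$.

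Taking the union over $i$, I would then write
\[
\bigcup_{i=1}^{k} \bigl((cA_0 + d) + (ct_i - d)\bigr) = \bigcup_{i=1}^{k} c(A_0 + t_i) = c \left( \bigcup_{i=1}^{k} (A_0 + t_i) \right) = c\,(G \setminus \{0\}),
\]
where the second equality uses that scalar multiplication by $c$ distributes over unions (it is just applying the map $x \mapsto cx$ elementwise). Since $\gcd(c, \ell^2+1) = 1$, multiplication by $c$ is a bijection of $G = \zed_{\ell^2+1}$ fixing $0$, so it maps $G \setminus \{0\}$ bijectively onto itself: $c\,(G \setminus \{0\}) = G \setminus \{0\}$. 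This establishes the covering condition $\bigcup_i ((cA_0+d) + (ct_i - d)) = G \setminus \{0\}$, which is exactly what it means for $cT - d$ to be a tiling of $cA_0 + d$.

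I would also check the two side conditions implicit in the definition: that $cA_0 + d$ has size $\ell$ and that $cT - d$ has the right cardinality $k$. Both follow from $c$ being a unit, since $x \mapsto cx + d$ and $x \mapsto cx - d$ are bijections, hence preserve cardinalities; thus $|cA_0 + d| = |A_0| = \ell$ and $|cT - d| = |T| = k$. The only genuine ingredient is the bijectivity of multiplication by $c$, which is precisely where the hypothesis $\gcd(c, \ell^2+1) = 1$ is used; I would not expect any real obstacle here, as the argument is a one-line cancellation followed by invoking that bijectivity. The mildest subtlety worth a sentence is confirming that the union remains disjoint after applying $c$ — but since a bijection preserves disjointness and the original union was disjoint, the scaled translates are automatically disjoint as well, so no double-covering occurs.
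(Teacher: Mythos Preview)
Your proof is correct and follows essentially the same approach as the paper: compute $(cA_0+d)+(ct_i-d)=c(A_0+t_i)$, take the union to get $c(G\setminus\{0\})$, and use that multiplication by a unit $c$ is a bijection fixing $0$. The paper's argument is simply a terser version of yours, omitting the explicit verification of cardinalities and disjointness.
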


\begin{proof}
We have \[ \bigcup _{i = 1}^{k} (A_0 + t_i) = G \setminus \{0\}.\]
Then \[ \bigcup _{i = 1}^{k} (cA_0 + d + c t_i- d) = \bigcup _{i = 1}^{k} c(A_0 + t_i) = c(G \setminus \{0\}) = G \setminus \{0\}.\]
\end{proof}

\begin{lemma}
\label{L2.5}
Suppose $G = \zed_{\ell^2+1}$. Then the set $A_0 = \{\ell, 2\ell, \dots, \ell^2\}$ has a unique tiling.
\end{lemma}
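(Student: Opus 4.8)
The plan is to reduce the claim to Lemma~\ref{L2.3} by exhibiting $A_0$ as an affine image of the interval set $\{0,1,\dots,\ell-1\}$ and then invoking the preceding lemma on affine transformations of tilings. First I would record the arithmetic fact that $\gcd(\ell,\ell^2+1)=1$ (any common divisor of $\ell$ and $\ell^2+1$ divides $\ell^2$ and hence divides $1$), so that multiplication by $\ell$ is a bijection of $\zed_{\ell^2+1}$ and the hypotheses of that lemma are satisfied with $c=\ell$.

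Next I would verify the key identity $\ell \cdot \{0,1,\dots,\ell-1\} + \ell = \{\ell,2\ell,\dots,\ell^2\} = A_0$ in $\zed_{\ell^2+1}$; this is the routine computation $\{0,\ell,2\ell,\dots,(\ell-1)\ell\}+\ell = \{\ell,2\ell,\dots,\ell^2\}$. Thus $A_0 = cB + d$ with $B = \{0,1,\dots,\ell-1\}$, $c=\ell$, and $d=\ell$. The preceding lemma then says that whenever $T$ is a tiling of $B$, the set $\ell T - \ell$ is a tiling of $A_0$, giving a map from tilings of $B$ to tilings of $A_0$.

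For uniqueness the crucial point is that this map is in fact a \emph{bijection}, not merely a one-way correspondence. To see this I would apply the same lemma in the reverse direction: since $B = \ell^{-1}A_0 - \ell^{-1}d$ (using that $\ell$ is invertible modulo $\ell^2+1$), the lemma with $c$ replaced by $\ell^{-1}$ shows that any tiling $S$ of $A_0$ yields a tiling $\ell^{-1}(S+\ell)$ of $B$, and these two maps are mutually inverse. Hence the tilings of $B$ and of $A_0$ are in bijection. Since Lemma~\ref{L2.3} guarantees that $B$ has exactly one tiling, $A_0$ has exactly one tiling as well.

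The only genuine obstacle is confirming bijectivity of the correspondence, i.e.\ that the inverse affine map carries tilings back to tilings; this is settled cleanly by a second application of the transformation lemma, so no new idea is required. Everything else — the gcd computation and the affine identity — is a direct verification. If desired, one could additionally write out the resulting tiling explicitly as $\ell T - \ell$, where $T = \{1,\ell+1,\dots,\ell^2-\ell+1\}$ is the unique tiling of $B$ from Lemma~\ref{L2.3}, but this is not needed for the statement.
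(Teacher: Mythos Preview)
Your proposal is correct and takes essentially the same approach as the paper: exhibit an affine equivalence between $A_0$ and $\{0,1,\dots,\ell-1\}$ and then invoke Lemma~\ref{L2.3} together with the preceding transformation lemma. The paper phrases the affine map in the opposite direction (taking $c=-\ell$, $d=-1$, which is precisely your $\ell^{-1}$ since $\ell\cdot(-\ell)\equiv 1$) and leaves the bijectivity of the tiling correspondence implicit, whereas you spell it out by applying the transformation lemma twice; apart from this cosmetic difference the arguments coincide.
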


\begin{proof}
Let $c = -\ell \bmod (\ell^2+1)$ and let $d = -1$. Then $cA_0 + d = \{0, 1, \dots , \ell-1 \}$ (that is, $\{0, 1, \dots , \ell-1 \}$ and $\{\ell, 2\ell, \dots, \ell^2\}$ are equivalent under an affine transformation). We know from Lemma \ref{L2.3} that $\{0, 1, \dots , \ell-1 \}$ has a unique tiling, so $A_0$ also has a unique tiling. 
\end{proof}

\begin{corollary}
Suppose $m \geq 3$. No $(\ell^2+1, m, \ell; 1)$-SCEDF in $\zed_{\ell^2+1}$ can contain $\{0, 1, \dots , \ell-1 \}$ or $\{\ell, 2\ell, \dots, \ell^2\}$.
\end{corollary}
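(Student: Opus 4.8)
The plan is to recognize this statement as an immediate corollary of Theorem~\ref{T2.2}, combined with the two uniqueness-of-tiling lemmas established just above it. Theorem~\ref{T2.2} asserts that any $\ell$-subset of $\zed_{\ell^2+1}$ possessing a \emph{unique} tiling cannot appear in any $(n,m,\ell;1)$-SCEDF with $m \geq 3$. Since the corollary concerns SCEDFs with $n = \ell^2+1$ (which is forced anyway by the parameter relation $\ell^2 = \lambda(n-1)$ in the $\lambda = 1$ case), the ambient group is precisely $\zed_{\ell^2+1}$, so the hypotheses of Theorem~\ref{T2.2} are exactly in scope.

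First I would dispose of the set $\{0, 1, \dots, \ell-1\}$. By Lemma~\ref{L2.3}, this set has a unique tiling in $\zed_{\ell^2+1}$, so Theorem~\ref{T2.2} applies directly to conclude that it cannot occur in an $(\ell^2+1, m, \ell; 1)$-SCEDF whenever $m \geq 3$. Next I would handle the set $\{\ell, 2\ell, \dots, \ell^2\}$ in an entirely parallel fashion, invoking Lemma~\ref{L2.5} for the uniqueness of its tiling and then applying Theorem~\ref{T2.2} a second time. Since the corollary is a disjunction over these two sets, establishing the conclusion separately for each completes the argument.

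There is no real obstacle here, as this is a two-line deduction: all the genuine work was already carried out in proving that both candidate sets have unique tilings (and recall that Lemma~\ref{L2.5} itself reduces to Lemma~\ref{L2.3} through the affine-invariance of tilings). The only point meriting a moment's attention is simply to confirm that the order $n = \ell^2+1$ appearing in the corollary's parameter list coincides with the group order $\zed_{\ell^2+1}$ used throughout Lemmas~\ref{L2.3} and~\ref{L2.5} and in Theorem~\ref{T2.2}, so that the theorem is being invoked for the correct combinatorial structure.
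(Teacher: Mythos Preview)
Your proposal is correct and matches the paper's own proof essentially verbatim: the paper simply states that the corollary follows immediately from Theorem~\ref{T2.2}, Lemma~\ref{L2.3}, and Lemma~\ref{L2.5}.
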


\begin{proof} This follows immediately from Theorem \ref{T2.2}, Lemma \ref{L2.3} and Lemma \ref{L2.5}.
\end{proof}

We can also use the machinery developed above to show that certain other $(\ell^2+1, 2, \ell; 1)$-SEDFs in $\zed_{\ell^2+1}$ cannot be extended. We consider a construction presented by Huczynska, Jefferson and Nep\v{s}insk\'{a} in \cite{HucJefNep}.

\begin{theorem}
\label{2a.thm}
\cite[Theorem 3.1]{HucJefNep}
Suppose $\ell = 2a$. Define
\[ A_0 = \{0, \dots , a-1\} \cup \{2a, \dots , 3a-1\}\] and 
\[ A_1 =  \{(4i-1)a, 4ia : i = 1, \dots , a\}.\]
Then $\{A_0,A_1\}$ is an $(\ell^2+1, 2, \ell; 1)$-SEDF in $\zed_{\ell^2+1}$.
\end{theorem}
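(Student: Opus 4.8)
The plan is to verify directly the two multiset equations defining an $(\ell^2+1,2,\ell;1)$-SEDF, namely $\mathcal{D}(A_0,A_1) = \zed_{\ell^2+1}\setminus\{0\}$ and $\mathcal{D}(A_1,A_0) = \zed_{\ell^2+1}\setminus\{0\}$. First I would observe that these two conditions are equivalent: as multisets $\mathcal{D}(A_0,A_1) = -\mathcal{D}(A_1,A_0)$, and since negation permutes the nonzero elements of $\zed_{\ell^2+1}$, it suffices to establish just one of them. Because $|A_0|\,|A_1| = \ell^2 = (\ell^2+1)-1$, the multiset $\mathcal{D}(A_1,A_0)$ has exactly the right number of entries, so the task reduces to showing that every nonzero residue is obtained exactly once, i.e.\ that the difference map is a bijection onto $\{1,\dots,\ell^2\}$.

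The key is to exploit that $A_1$ consists entirely of multiples of $a$ while $A_0$ is built from two length-$a$ intervals. I would write $A_0 = \{qa+r : q\in\{0,2\},\ r\in\{0,\dots,a-1\}\}$ and $A_1 = \{ma : m\in M\}$, where $M = \{4i-1,\,4i : i=1,\dots,a\}$. A typical difference then takes the clean form $ma - (qa+r) = (m-q)a - r$. The main step is to check that the map $(m,q)\mapsto m-q$ is a bijection from $M\times\{0,2\}$ onto $\{1,2,\dots,4a\}$: the value $m=4i-1$ contributes $\{4i-3,\,4i-1\}$, sweeping out all odd numbers up to $4a-1$, while $m=4i$ contributes $\{4i-2,\,4i\}$, sweeping out all even numbers up to $4a$. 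A count ($|M\times\{0,2\}| = 4a$) confirms this is a bijection.

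Writing $s = m-q$, the differences become $\{sa - r : s\in\{1,\dots,4a\},\ r\in\{0,\dots,a-1\}\}$. For fixed $s$ these fill the block of consecutive integers $[(s-1)a+1,\ sa]$, and as $s$ runs from $1$ to $4a$ these blocks tile $\{1,2,\dots,4a^2\}$ exactly, which is precisely $\zed_{4a^2+1}\setminus\{0\}$. Hence $\mathcal{D}(A_1,A_0) = \zed_{\ell^2+1}\setminus\{0\}$ with each element appearing once, and the SEDF condition follows by the reduction above. I expect the only delicate point to be the bookkeeping in the bijection $(m,q)\mapsto m-q$ — the interleaving of the two shifts $q\in\{0,2\}$ with the two residue classes $\{4i-1,4i\}$ of the multipliers — but this is a finite, transparent parity argument rather than a genuine obstruction.
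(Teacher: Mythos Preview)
Your proof is correct. The paper does not actually prove this theorem; it is quoted with attribution to \cite[Theorem 3.1]{HucJefNep} and used as a black box in the subsequent tiling lemmas. So there is no ``paper's own proof'' to compare against here.

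That said, your direct verification is clean and complete. The parametrisation $A_0=\{qa+r:q\in\{0,2\},\,r\in\{0,\dots,a-1\}\}$ and the observation that $(m,q)\mapsto m-q$ bijects $M\times\{0,2\}$ onto $\{1,\dots,4a\}$ are exactly the right moves; from there the differences $sa-r$ tile $\{1,\dots,4a^2\}$ as integers, hence as residues modulo $4a^2+1$. One small point you might make explicit for completeness is disjointness of $A_0$ and $A_1$ (the largest element of $A_0$ is $3a-1$ and the smallest of $A_1$ is $3a$, so this is immediate), but the difference computation itself is sound.
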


\begin{lemma}
Suppose $G = \zed_{\ell^2+1}$ where $\ell = 2a$. Then the set $A_0$ defined in Theorem \ref{2a.thm} 
has a unique tiling.
\end{lemma}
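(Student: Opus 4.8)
The plan is to show that the set $A_0 = \{0, \dots, a-1\} \cup \{2a, \dots, 3a-1\}$ has a unique tiling by exploiting its structure as the union of two length-$a$ intervals. First I would observe that $A_0$ is built from two ``blocks'' of $a$ consecutive residues, separated by a gap of $a$; explicitly $A_0 = I \cup (I + 2a)$ where $I = \{0, \dots, a-1\}$. A tiling $T = \{t_1, \dots, t_\ell\}$ must satisfy $\bigcup_i (A_0 + t_i) = \zed_{\ell^2+1} \setminus \{0\}$, and since $|A_0| = \ell = 2a$ and $|T| = \ell$, this is a \emph{partition} of the $\ell^2$ nonzero residues into $\ell$ disjoint translates.

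The key idea I would pursue is to reduce to the already-established uniqueness result of Lemma~\ref{L2.3} via an affine transformation, mirroring the strategy of Lemma~\ref{L2.5}. The obstacle is that $A_0$ here is not a single interval, so no single multiplier $c$ turns it into $\{0, \dots, \ell-1\}$. Instead I would try to find a $c$ with $\gcd(c, \ell^2+1)=1$ such that $c A_0 + d$ is one of the sets already known to have a unique tiling, or directly argue uniqueness by a covering/counting argument on the block structure. For the direct approach, I would think of $A_0$ as determining, for each translate $A_0 + t$, a pair of intervals at positions $t$ and $t+2a$; covering all of $\zed_{\ell^2+1} \setminus \{0\}$ exactly once forces strong constraints on how consecutive blocks can abut. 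I would track which residues lie in the ``gap'' positions (those of the form $t+a, \dots, t+2a-1$ relative to each translate) and argue that the requirement to cover every nonzero residue exactly once pins down the $t_i$ uniquely up to the forced interval pattern, much as in Lemma~\ref{L2.3}.

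The main obstacle I expect is handling the interleaving of the two blocks: unlike the single-interval case, a translate of $A_0$ covers two separated stretches, so the combinatorics of fitting them together without overlap and without gaps is more delicate. The cleanest resolution is likely the affine-reduction route: I would look for an explicit $c$ (some power or multiple of $\ell$ modulo $\ell^2+1$) under which the two-block set $A_0$ maps to a single contiguous interval, since multiplication by $\ell$ tends to ``spread out'' intervals arithmetically in $\zed_{\ell^2+1}$ in a controlled way. If such a $c$ exists, uniqueness follows immediately from Lemma~\ref{L2.3} and the transformation lemma (the unnamed lemma preceding Lemma~\ref{L2.5}), exactly as in the proof of Lemma~\ref{L2.5}. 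I would verify that the proposed $A_1$ from Theorem~\ref{2a.thm}, namely $-A_1$, is indeed the unique tiling, confirming consistency. Once uniqueness of the tiling is established, Theorem~\ref{T2.2} will give, as the intended downstream corollary, that this SEDF cannot be extended to an SCEDF with $m \geq 3$.
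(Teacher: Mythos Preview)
Your preferred route---finding an invertible $c$ so that $cA_0+d$ becomes the single interval $\{0,1,\dots,\ell-1\}$ and then invoking Lemma~\ref{L2.3}---does not work for $a\geq 2$. A quick invariant shows why: under any affine map the \emph{multiset of multiplicities} of internal differences is preserved. In the interval $\{0,\dots,2a-1\}$ the difference $1$ occurs $2a-1$ times, whereas in $A_0=\{0,\dots,a-1\}\cup\{2a,\dots,3a-1\}$ the most frequent difference is $1$, occurring only $2(a-1)=2a-2$ times (the two length-$a$ blocks each contribute $a-1$, and the cross-block differences are all at least $a+1$). So for $a\geq 2$ no $c$ sends $A_0$ to an interval, and the affine-reduction strategy of Lemma~\ref{L2.5} is unavailable here. (The paper in fact remarks explicitly, just after the corresponding lemma for $A_1$, that the affine trick could not be used for this family.)

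The paper instead uses the direct covering argument you list as a fallback, and the key step you are missing is this: if $A_0+t$ is one of the translates in a tiling, look at the $a$ ``gap'' residues $t+a,\dots,t+2a-1$. These are flanked on both sides by residues already covered by $A_0+t$, so the \emph{only} translate that can cover them without overlapping $A_0+t$ is $A_0+(t+a)$ or $A_0+(t-a)$. Either choice, together with $A_0+t$, covers exactly the $4a$ consecutive residues $\{t,t+1,\dots,t+4a-1\}$ (respectively $\{t-a,\dots,t+3a-1\}$). Thus the $2a$ translates in any tiling come in $a$ pairs, each pair covering a block of $4a$ consecutive residues; since the target is the single interval $\{1,\dots,4a^2\}$, these blocks are forced to be $\{1,\dots,4a\},\{4a+1,\dots,8a\},\dots$, and within each block the pair of $t$-values is determined. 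This yields the unique tiling $T=\{4ia+1,(4i+1)a+1:0\leq i\leq a-1\}$. Your sketch gestures at ``tracking gap positions'' but does not isolate this pairing observation, which is what makes the argument go through.
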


\begin{proof} Each translate of $A_0$ consists of two disjoint sets of $a$ consecutive residues, separated by $a$ elements of $\zed_{\ell^2+1}$. It follows that if we have a translate $A_0 + t$ in $T$, we must also have exactly one of  the two translates $A_0 + t + a$ or $A_0 + t - a$. In either case, these two translates together cover $4a$ consecutive residues. There are in total $\ell^2 = 4a^2$ nonzero residues to cover, namely, $1, \dots , 4a^2$. So we have to choose translates $A_0+1$ and $A_0 + a+1$, which cover the first $4a$ nonzero residues. Then we must choose the translates $A_0 + 4a + 1$ and $A_0 + 5a+1$. Continuing, we see that
\[ T = \{ 4ia+1,(4i+1)a+1: 0 \leq i \leq a-1\}\] is the unique tiling of $A_0$.
\end{proof}

\begin{remark}
{\rm 
This tiling gives rise to the $(\ell^2+1, 2, \ell; 1)$-SEDF from Theorem \ref{2a.thm}, since
\begin{eqnarray*}
-T 
&=& \{ 4a^2-4ia, 4a^2-(4i+1)a : 0 \leq i \leq a-1\}\\
&=& \{ 4a(a-i), 4a(a-i) - a : 0 \leq i \leq a-1\}\\
&=& \{ 4aj, 4aj - a : 1 \leq j \leq a\} \quad \text{(setting $j = a-i$)}\\
&=& \{ 4aj, (4a-1)j  : 1 \leq j \leq a\} \\
&=& A_1.
\end{eqnarray*}
}
\end{remark}

\begin{lemma}
Suppose $G = \zed_{\ell^2+1}$ where $\ell = 2a$. Then the set $A_1$ defined in Theorem \ref{2a.thm} 
has a unique tiling.
\end{lemma}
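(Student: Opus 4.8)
The plan is to reduce $A_1$ to a more transparent ``two block'' set by an invertible affine map, and then to establish uniqueness of the tiling of that reduced set via a matching argument. First I would exploit that $\gcd(4a,\,4a^2+1)=1$: indeed $4a^2+1\equiv 1 \pmod a$ and $4a^2+1$ is odd, so $4a$ is invertible modulo $n=\ell^2+1=4a^2+1$. Writing $c=(4a)^{-1}$ and using $4^{-1}\equiv -a^2$ (which follows from $4a^2\equiv -1$), a direct computation gives
\[ c\,A_1 - 1 = \{0,1,\dots,a-1\} \cup \{a^2, a^2+1, \dots, a^2+a-1\} =: C. \]
By the affine-transformation lemma above (a set $X$ tiles $A_1$ if and only if $cX+1$ tiles $C$, since the map is invertible), together with the uniqueness-transfer argument already used in the proof of Lemma~\ref{L2.5}, the set $A_1$ has a unique tiling if and only if $C$ does. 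So it suffices to prove that $C$ has a unique tiling.

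The key structural observation is that $C$ factors as $C = I \oplus D$, where $I=\{0,1,\dots,a-1\}$ is a block of $a$ consecutive residues and $D=\{0,a^2\}$; this is an exact sum because $a^2 > a-1$. Hence if $T$ tiles $C$, then $C\oplus T = I \oplus (D\oplus T)$ is a partition of $\mathbb{Z}_n\setminus\{0\}$; since $0\in I$, the sums $d+t$ (with $d\in D$, $t\in T$) are forced to be distinct, so $U := D\oplus T$ is an exact $4a$-set tiling the block $I$. By the same argument as in Lemma~\ref{L2.3} (a block of $a$ consecutive residues in $\mathbb{Z}_n$ with $n\equiv 1 \pmod a$ has a unique tiling by the aligned intervals), the only tiling of $I$ is $S=\{1+ja : 0\le j \le 4a-1\}$. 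Therefore tilings $T$ of $C$ correspond bijectively to the ways of writing $S$ as a disjoint union of pairs $\{t,\,t+a^2\}$, i.e.\ to perfect matchings of the graph on vertex set $S$ whose edges join $x$ to $x+a^2$.

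It remains --- and this is the main obstacle --- to show that this matching is unique. Reindexing $S$ by $s_j = 1+ja$, I would check that adding $a^2$ sends $s_j \mapsto s_{j+a}$ whenever $0\le j \le 3a-1$, while for $j\ge 3a$ the relation $4a^2\equiv -1$ forces $s_j + a^2 \equiv (j-3a)a \pmod n$, which is $\equiv 0 \pmod a$ and hence lies outside $S$. Consequently the $+a^2$ adjacency splits $S$ into the $a$ chains $\{s_{j_0}, s_{j_0+a}, s_{j_0+2a}, s_{j_0+3a}\}$ for $j_0 = 0,\dots,a-1$, each of which is a path on four vertices. A path on four vertices has a unique perfect matching, namely its two end edges, so the factorization $S = D\oplus T$ is unique, $C$ has a unique tiling, and therefore so does $A_1$.

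The delicate points to get right are the two boundary computations: that each chain is genuinely a path rather than a cycle (its last vertex has no $+a^2$ neighbour back in $S$) and that its endpoints have no further neighbours in $S$ (the $-a^2$ image of $s_{j_0}$ leaves $S$). Both hinge on the single identity $4a^2\equiv -1 \pmod n$, so I would isolate that congruence once and reuse it. The degenerate case $a=1$ (where $I$ is a singleton) is trivial and can be verified directly.
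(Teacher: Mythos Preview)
Your argument is correct and takes a genuinely different route from the paper. The paper works directly with $A_1$: it observes that the $2a$ elements of $A_1$, viewed as integers in $\{0,\dots,4a^2\}$, are all multiples of $a$, and then argues residue-class by residue-class modulo $a$ that the two translates needed to cover each class are forced by the ``pairs of consecutive multiples separated by two gaps'' shape of $A_1$. You instead apply the affine map $x\mapsto (4a)^{-1}x-1$ (noting $(4a)^{-1}\equiv -a$) to replace $A_1$ by the transparent set $C=\{0,\dots,a-1\}\oplus\{0,a^2\}$, invoke the block-tiling argument of Lemma~\ref{L2.3} (which goes through verbatim for an $a$-block in $\zed_{4a^2+1}$) to pin down $D\oplus T=S$, and finish by observing that the $+a^2$ adjacency on $S$ decomposes into $a$ disjoint paths $P_4$, each with a unique perfect matching.

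Your version is more modular---it recycles the affine-transfer lemma and the reasoning behind Lemma~\ref{L2.3}---and makes the source of uniqueness (a $P_4$ has exactly one perfect matching) fully explicit, while the paper's direct argument is shorter but more ad hoc and leaves some modular wraparound issues implicit. One small remark: for the uniqueness conclusion you only need the forward half of your claimed bijection (every tiling $T$ of $C$ yields a factorization $S=D\oplus T$), since existence of at least one tiling is already guaranteed by Theorem~\ref{2a.thm}; the reverse implication is true but not required.
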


\begin{proof}
The set $A_1 = \{3a,4a,7a,8a,11a,12a, \dots \}$ consists of $2a$ residues, each of which is divisible by $a$. Since there are in total $4a$ nonzero residues that are divisible by $a$, we require two translates of the form $A_1 + t_i$, where $t_i$ is divisible by $a$. We show that these two translates are uniquely determined. First, observe that $A_1$ consists of $a$ pairs of consecutive residues divisible by $a$, and each of these pairs is separated by two residues that are divisible by $a$. It follows that if we have a translate $A_0 + t$ in $T$, we must also have exactly one of the two translates $A_0 + t + 2a$ or $A_0 + t - 2a$.  In either case, these two translates together cover $4a$ consecutive residues divisible by $a$. We want these $4a$ residues to be $a, 2a, \dots, 4a^2$. The only way this can be done is to take $t_1 = 0$, $t_2 = -2a$.

In general, for any fixed $j$ such that $0 \leq j \leq a-1$, we require two translates $A_1+ t_i$ where $t_i \equiv j \bmod a$. Using a similar argument, these two translates are uniquely determined.
\end{proof}

\begin{remark} 
{\rm For the SEDF from Theorem \ref{2a.thm}, the sets $A_0$ and $A_1$ are not equivalent under an affine transformation
(except for $a=2$, where $A_1 = 2A_0 + 6$). Therefore the proof technique used in Lemma \ref{L2.5} cannot be applied,  so we had to prove that $A_0$ and $A_1$  have unique tilings using different arguments.
}
\end{remark}

\begin{corollary}
Suppose $m \geq 3$. No $(\ell^2+1, m, \ell; 1)$-SCEDF in $\zed_{\ell^2+1}$ can contain $A_0$ or $A_1$, where $A_0$ or $A_1$ are as defined in Theorem \ref{2a.thm}. 
\end{corollary}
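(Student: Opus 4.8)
The plan is to reduce the claim about the final corollary directly to the machinery that has already been assembled. The corollary asserts that no $(\ell^2+1, m, \ell; 1)$-SCEDF with $m \geq 3$ can contain either $A_0$ or $A_1$ from Theorem \ref{2a.thm}. Since the two preceding lemmas establish that both $A_0$ and $A_1$ possess \emph{unique} tilings in $\zed_{\ell^2+1}$, the corollary should follow by a single invocation of Theorem \ref{T2.2}, which states precisely that a set with a unique tiling cannot occur in any such SCEDF with $m \geq 3$.

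First I would recall the logical chain: Theorem \ref{T2.2} takes as hypothesis that a set $A_0 \subseteq \zed_{\ell^2+1}$ of size $\ell$ has a unique tiling, and concludes it cannot appear in an $(n,m,\ell;1)$-SCEDF with $m \geq 3$. The mechanism behind that theorem is that in any such SCEDF, both $\{A_{m-1}, A_0\}$ and $\{A_0, A_1\}$ must be $(\ell^2+1, 2, \ell; 1)$-SEDFs (by Theorem \ref{T3}), and each SEDF containing $A_0$ corresponds via the tiling lemma to a tiling of $A_0$; uniqueness of the tiling then forces $A_{m-1} = A_1$, contradicting disjointness when $m \geq 3$.

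Then I would simply apply this twice. Applying Theorem \ref{T2.2} with the set $A_0$ of Theorem \ref{2a.thm}, which the first of the two lemmas shows has a unique tiling, yields that $A_0$ cannot occur in an $(\ell^2+1, m, \ell; 1)$-SCEDF with $m \geq 3$. Applying Theorem \ref{T2.2} again with the set $A_1$, whose uniqueness of tiling is established in the second lemma, gives the corresponding statement for $A_1$. Combining the two conclusions gives the corollary.

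I do not anticipate a genuine obstacle here, since all the substantive work—namely verifying that $A_0$ and $A_1$ each admit exactly one tiling—has been carried out in the two lemmas immediately preceding the corollary, and the affine-equivalence subtlety (that $A_0$ and $A_1$ are not in general related by an affine map, unlike the situation in Lemma \ref{L2.5}) has already been addressed by proving the two uniqueness results separately. The only point requiring mild care is that Theorem \ref{T2.2} is stated with the symbol $A_0$ as a generic set name; I would make explicit that it applies to \emph{any} size-$\ell$ subset of $\zed_{\ell^2+1}$ with a unique tiling, and in particular to the set called $A_1$ in Theorem \ref{2a.thm}. Thus the proof is a one-line deduction: the result follows immediately from Theorem \ref{T2.2} together with the two preceding lemmas establishing the uniqueness of the tilings of $A_0$ and $A_1$.
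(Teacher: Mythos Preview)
Your proposal is correct and matches the paper's approach exactly: the corollary is an immediate consequence of Theorem \ref{T2.2} applied to each of $A_0$ and $A_1$, using the two preceding lemmas that establish uniqueness of their tilings. The paper in fact states this corollary without an explicit proof, relying on the parallel with the earlier corollary (which it proves by the same one-line deduction from Theorem \ref{T2.2} and the corresponding uniqueness lemmas).
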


\section{Summary}

There remain interesting open questions concerning CEDFs.
For example, are there examples of $(\ell^2 + 1,2, \ell; 1)$-SEDF that are not $\alpha$-valuations or equivalent to $\alpha$-valuations? The first open case is when $\ell = 5$. 
Also, we ask if there exists an $(m\ell^2+1,m,\ell;1)$-CEDF whenever $\ell \geq 2$ is even and $m \geq 3$ is odd. 
Finally, we note that CEDFs with $\lambda > 1$ have not received systematic study so far.

We used cyclotomy to find close approximations to certain (nonexistent) SCEDFs. However, this approach requires different calculations to be performed for every fixed value of $m$. It would therefore be of interest to find general constructions for good strong circular AMD codes.

\end{document}